
\documentclass[preprint,12pt]{elsarticle}




\usepackage{amssymb}
\usepackage{amsthm}

\usepackage{amsmath}

\usepackage{nccbbb}
\usepackage{hyperref}

\newdefinition{definition}{Definition}[section]
\newdefinition{example}[definition]{Example}
\newdefinition{remark}[definition]{Remark}

\newtheorem{lemma}[definition]{Lemma}
\newtheorem{theorem}[definition]{Theorem}
\newtheorem{corollary}[definition]{Corollary}
\newtheorem{proposition}[definition]{Proposition}

\newcommand{\E}{\mathcal{E}} 
\newcommand{\F}{\mathcal{F}} 
\newcommand{\A}{\mathcal{A}} 
\newcommand{\D}{\mathcal{D}} 
\newcommand{\R}{\mathbb{R}} 
\newcommand{\Prb}{\mathbb{P}} 
\newcommand{\Ep}{\mathbb{E}} 
\newcommand{\ud}{\mathrm{d}} 
\newcommand{\sexp}{\mathrm{Sexp}} 

\numberwithin{equation}{section}


\journal{XXX}

\begin{document}

\begin{frontmatter}



\title{Exponential Change of Measure for General Piecewise Deterministic Markov Processes\tnoteref{t1}}
\tnotetext[t1]{This work was supported by National Natural Science Foundation of China (11471218) and Hebei Higher School Science and Technology Research Projects (ZD20131017)}


\author[csu]{Zhaoyang Liu}\ead{liu.zhy@csu.edu.cn}

\author[csu]{Yuying Liu}\ead{yuyingliu@csu.edu.cn}

\author[stdu]{Guoxin Liu\corref{cor1}}\ead{liugx@stdu.edu.cn}

\cortext[cor1]{Corresponding author}

\address[csu]{School of Mathematics and Statistics, Central South University, Changsha, Hunan, China, 410083}

\address[stdu]{Department of Applied Mathematics and Physics, Shijiazhuang Tiedao University, Shijiazhuang, Hebei, China, 050043}

\begin{abstract}
  We consider a general piecewise deterministic Markov process (PDMP) $X=\{X_t\}_{t\geqslant 0}$ with measure-valued generator $\A$, for which the conditional distribution function of the inter-occurrence time is not necessarily absolutely continuous. A general form of the exponential martingales is presented as
  \begin{equation*}
    M^f_t=\frac{f(X_t)}{f(X_0)}\left[\sexp\left(\int_{(0,t]}
    \frac{\ud L(\A f)_s}{f(X_{s-})}\right)\right]^{-1}.
  \end{equation*}
  Using this exponential martingale as a likelihood ratio process, we define a new probability measure. It is shown that the original process remains a general PDMP under the new probability measure. And we find the new measure-valued generator and its domain.
\end{abstract}

\begin{keyword}
exponential change of measure \sep piecewise deterministic Markov process \sep measure-valued generator \sep Stieltjes exponential


\end{keyword}

\end{frontmatter}


\section{Introduction}

The aim of this paper is to give a detail account of change of probability measure technique for general piecewise deterministic Markov processes based on the measure-valued generator theory proposed by Liu et al. \cite{liu2017measure}.

Piecewise deterministic Markov processes (PDMPs) are introduced by Davis \cite{davis1984piecewise,davis1993markov}.
Jacod and Skorokhod \cite{jacod1996jumping} and Liu et al. \cite{liu2017measure} generalize the concept of PDMPs in the same way. Roughly speaking, a strong Markov process with natural filtration of discrete type is called a \emph{general} PDMP.
Starting from a state $x$, the motion of the process $X$ follows a deterministic path of a semi-dynamic system (SDS) $\phi$, i.e., $X_t=\phi_x(t)$, until the first random jump time $\tau_1$. The post-jump location $X_{\tau_1}$ is selected by a transition kernel $q$, and the motion of the process restarts from this new state $X_{\tau_1}$.
In \cite{davis1993markov}, the path of $\phi$ is supposed to be absolutely continuous with respect to time. This assumption is relaxed in \cite{liu2017measure}. It is only assumed to be right-continuous.
In \cite{davis1993markov}, the first random jump occurs either at a random time with a Poisson-like jump rate $\lambda(\phi_x(t))$ or when the SDS $\phi_x(t)$ hits the boundary of the state space. While, for a general PDMP, the tail distribution function of the first random jump time $\tau_1$ is supposed to be general with memorylessness along the path of $\phi$. In this way, these two kinds of random jumps can be dealt with in a unified framework. And it makes the model more general to cover a larger range of entities.
A general PDMP is uniquely determined by the three characteristics $\phi$, $F$ and $q$, so $(\phi,F,q)$ is called the \emph{characteristic triple} of the process.
In \cite{liu2017measure} the authors introduce a new concept of generator called \emph{measure-valued generator} for the general PDMPs. A measure-valued generator $\A$ is a mapping from a measurable function $f$ to an additive function $\A f(x,t)$ of the SDS $\phi$. 
The domain of generator is extended from the absolutely path-continuous functions to the locally path-finite variation functions.


Exponential change of measure is a useful technique applied in many areas, and has been studied extensively.
It\^o and Watanabe \cite{Ito1965Transformation}, Kunita \cite{Kunita1969Absolute,Kunita1976Absolute} and Palmowski and Rolski \cite{palmowski2002technique} discuss change of measure for general classes of Markov processes.
A discussion for L\'evy processes can be found in Sato \cite[Section 33]{Sato1999Levy}.
Cheridito et al. \cite{Cheridito2005Equivalent} discuss this topic for jump-diffusion processes.
Especially, Palmowski and Rolski \cite{palmowski2002technique} present a detail account of change of probability measure technique for c\`adl\`ag Markov processes including the PDMPs in Davis' sense. This technique is widely used for ruin probability (\cite{Hipp2004Asymptotics}), large deviation (\cite{Chetrite2015Nonequilibrium}), derivative pricing (\cite{Bielecki2008A}, \cite{Bo2011Exponential}, \cite{Jiang2008On}, \cite{Shen2013Longevity,Shen2013Pricing}). 
In \cite{palmowski2002technique}, the exponential martingale used as the likelihood ratio process for change of probability measure is expressed in terms of the extended generator $\hat{\A}$ as follow
\begin{equation}\label{eq.M^f.P&R}
  M^f_t=\frac{f(X_t)}{f(X_0)}\exp\left[-\int_0^t\frac{\hat{\A} f(X_s)}{f(X_s)}\ud s\right].
\end{equation}
However, this expression is not suitable for general PDMPs, and the function $f$ is limited in the domain of the extended generator. We generalize this result for general PDMPs by describing this exponential martingale in terms of measure-valued generator, i.e.,
\begin{equation}\label{eq.M^f.general}
  M^f_t=\frac{f(X_t)}{f(X_0)}\left[\sexp\left(\int_{(0,t]}
  \frac{\ud L(\A f)_s}{f(X_{s-})}\right)\right]^{-1},
\end{equation}
where the operator $L$ is defined as (\ref{eq.L(a)}) and $\sexp$ is the Stieltjes exponential (see (\ref{eq.M^f}) and Remark \ref{rm.Stieltjes.exp} for the details).

The structure of this paper is organized as follows.
In Section 2, we recall some results of \cite{liu2017measure}, including the notations and some basic properties of general PDMPs and the concept of the measure-valued generators for this kind of processes. Inspired by \cite{palmowski2002technique}, we present the expression of exponential martingale for a general PDMP in Section 3. This expression is described in terms of the measure-valued generator. Moreover, in some special cases of PDMPs, this one degenerates into the form of (\ref{eq.M^f.P&R}). And Corollary \ref{cor.M^f.P&R} shows us that (\ref{eq.M^f.P&R}) is suitable for PDMPs not only in Davis' sense. In Section 5, using the exponential martingale as the likelihood ratio process, we give the detail account of exponential change of measure technique for general PDMPs. We show that a general PDMP remains a general PDMP under the new probability measure, and we find its new characteristic triple (see Theorem \ref{thm.new.Markov&triple}). The new measure-valued generator and its domain are given in Theorem \ref{thm.new.generator}. And this new measure-valued generator can also be rewritten in terms of the old one or using the \emph{op\'erateur carr\'e du champ} (see Corollary \ref{cor.new.Af-1&2}).

\section{Preliminary}

\subsection{Definition of a general PDMP}

Let $(E,\E)$ be a Borel space.
We consider an $E$-valued general PDMP $X=\{X_t\}_{0\leqslant t<\tau}$ with life time $\tau$ defined on $(\Omega,\F,\Prb)$.
The jump times are a sequence of stopping times $\{\tau_n\}_{n\geqslant 0}$ satisfying that
\begin{equation*}\label{eq.jumping times}
  \tau_0=0,\quad \tau_{n+1}=\tau_n+\theta_{\tau_n}\circ\tau_1,\quad \tau_n\uparrow\tau,
\end{equation*}
where $\theta_t$ is a shift operator.

The process starts from $x\in E$ and follows the \emph{semi-dynamic system} (SDS) $\phi$ until the first jump time $\tau_1$, i.e., $X_t=\phi_x(t)$ for $t<\tau_1$, where $\phi$ satisfies that
\begin{equation}\label{eq.SDS}
  \phi_x(0)=x,\quad\phi_x(s+t)=\phi_{\phi_x(s)}(t),
\end{equation}
and that $\phi_x(\cdot)$ is c\`adl\`ag for all $x\in E$.
The first jump time $\tau_1$ has the conditional tail distribution function $F$ defined by $$F(x,t)=\Prb\{\tau_1>t\,|X_0=x\},$$
which is called the \emph{conditional survival function}.
The location of the process at the jump time $\tau_1$ is selected by the \emph{transition kernel} $q$ defined by
$$q(x,t,B)= \Prb\{X_{\tau_1}\in B\,|X_0=x,\tau_1=t\},\quad B\in\E.$$
And then the process restarts from this new state $X_{\tau_1}$ as before.
Thus the process can be expressed as
\begin{equation}\label{eq.PDMP}
  X_t=\sum_{n=0}^\infty\phi_{X_{\tau_n}}(t-\tau_n)\bbbone_{\{\tau_n\leqslant t<\tau_{n+1}\}}.
\end{equation}
$(\phi,F,q)$ is called the \emph{characteristic triple} of the general PDMP $X$.
A general PDMP $X$ is \emph{regular} if $\Prb\{\tau=\infty|X_0=x\}=1$ for every $x\in E$.

Set $$c(x)=\inf\{t>0:F(x,t)=0\},$$ and
\begin{equation*}
  \mathcal{I}_x=
\left\{
  \begin{array}{ll}
    \R_+, & c(x)=\infty; \\
    \displaystyle[0,c(x)), & c(x)<\infty,\,F(x,c(x)-)=0; \\
    \displaystyle[0,c(x)], & c(x)<\infty,\,F(x,c(x)-)>0.
  \end{array}
\right.
\end{equation*}
Note that, for a general PDMP $X$, the conditional survival function $F$ and the transition kernel $q$ satisfy that
\begin{gather}
  F(x,0)=1,\quad F(x,s+t)=F(x,s)F(\phi_x(s),t), \label{eq.F}\\
  q(x,t,\{\phi_x(t)\})=0,\quad q(x,s+t,B)=q(\phi_x(s),t,B),
\end{gather}
for all $s,t\in\R_+$ such that $s+t\in\mathcal{I}_x$.

For convenience, we extend the state space by adding an isolated point $\Delta$ to $E$. The SDS $\phi$ is also extended by
\begin{equation}\label{eq.extend SDS}
  \phi_x(c(x))=
  \left\{
    \begin{array}{ll}
      \displaystyle \lim_{t\uparrow c(x)}\phi_x(t), &
      \hbox{if $\displaystyle \lim_{t\uparrow c(x)}\phi_x(t)$ exists;} \\
      \Delta, & \hbox{otherwise,}
    \end{array}
  \right.
\end{equation}
for $x\in E$. Set
$$\partial^+E=\{\phi_x(c(x)):x\in E\},$$
and denote $\bar{E}=E\cup\partial^+E$.
For any $x\in E$,
$$\Phi_x=\{\phi_x(t):t\in\mathcal{I}_x\},$$
the subset of $\bar{E}$, is called a \emph{trajectory} of SDS $\phi$.

  For an SDS, different trajectories starting from different states may join together at some states.
  A state $x\in E$ is called a \emph{confluent state} of SDS $\phi$ if for any small $s>0$ there exist two distinguishable states $x_1,x_2$ such that $x=\phi_{x_1}(s)=\phi_{x_2}(s)$.

\subsection{Additive functionals and measure-valued generator}

To study the properties of general PDMPs, Liu et al. \cite{liu2017measure} introduce the so-called \emph{additive functionals} of an SDS.

\begin{definition}
  Let $\phi$ be an SDS. A measurable function $a:E\times\R_+\mapsto\R$ such that $a(x,\cdot)$ is c\`adl\`ag for each $x\in E$ is called an \emph{additive functional} of the SDS $\phi$ if for any $x\in E$, $s,t\in\R_+$ and $s+t\in\mathcal{I}_x$,
  \begin{equation}\label{eq.add of SDS}
    a(x,0)=0,\quad a(x,s)+a(\phi_x(s),t)=a(x,s+t).
  \end{equation}
  An additive functional $a$ is called to have \emph{locally finite variation} if $a(x,\cdot)$ has locally finite variation on $\mathcal{I}_x$ for all $x\in E$, i.e.,
  \begin{equation*}
    \int_{(0,t]}\big|a\big|(x,\ud s)<\infty \quad \hbox{for all }x\in E,\,t\in\mathcal{I}_x.
  \end{equation*}
  The space of all the additive functionals of the SDS $\phi$ is denoted by $\mathfrak{A}_\phi$. And denote by $\mathfrak{A}_\phi^{loc}$, the space of all the additive functionals of the SDS $\phi$ with locally finite variation.
\end{definition}

The Lebesgue decomposition of an additive functional of the SDS $\phi$ is given in \cite{liu2017measure} as follow.
Denote
$$J_a=\{\phi_x(t):a(x,t)-a(x,t-)\neq 0,x\in E,t\in\mathcal{I}_x\setminus\{0\}\},$$
which consists all the jumping states of $a(x,\cdot)$ for each $x\in E$.

\begin{theorem}\label{thm.add Lebesgue}
  Let $a\in \mathfrak{A}_\phi^{loc}$.
  Assume that $J_a$ contains no confluent state. Then, for any $x\in E$, there exist measurable functions $\mathcal{X}a$ and $\Delta a$ with
$$\int_0^t\big|\mathcal{X}a(\phi_x(s))\big|\ud s<\infty \quad\hbox{and}\quad
\sum_{0<s\leqslant t}\big|\Delta a(\phi_x(s))\big|<\infty\quad \hbox{for all } t\in\mathcal{I}_x$$
such that $a(x,\cdot)$ has the Lebesgue decomposition
\begin{equation}\label{eq.a Lebesgue}
  a(x,t)=\int_0^t\mathcal{X}a(\phi_x(s))\ud s+a^{sc}(x,t)+\!\sum_{0<s\leqslant t}\!\Delta a(\phi_x(s)),
  \quad t\in\mathcal{I}_x,
\end{equation}
where
\begin{align*}
  &\mathcal{X}a(x)=
\left\{
  \begin{array}{ll}
    \displaystyle\frac{\partial^+a(x,t)}{\partial t}\Big|_{t=0}, &
\hbox{if $\displaystyle\frac{\partial^+a(x,t)}{\partial t}\Big|_{t=0}$ exists;} \\
    0, & \hbox{otherwise,}
  \end{array}
\right.\\
  &\Delta a(\phi_x(t))=a(x,t)-a(x,t-),
\end{align*}
and $a^{sc}(x,\cdot)$ is the singularly continuous part of $a(x,\cdot)$. Moreover, the three terms on the right side of \textnormal{(\ref{eq.a Lebesgue})} are all additive functionals of SDS $\phi$.
\end{theorem}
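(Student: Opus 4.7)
The plan is to start from the classical Lebesgue decomposition applied pointwise in $x$: for each fixed $x\in E$, the c\`adl\`ag function $a(x,\cdot)$ of locally finite variation on $\mathcal{I}_x$ decomposes uniquely as the sum of an absolutely continuous part, a singularly continuous part, and a pure jump part. The task is then to identify the density of the absolutely continuous part and the jump sizes as functions of the current state $\phi_x(s)$ alone, and to verify that each of the three summands is itself an additive functional of $\phi$.

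For the absolutely continuous part I would exploit the cocycle identity (\ref{eq.add of SDS}): at any $t$ at which the right derivative of $a(x,\cdot)$ exists,
$$
\lim_{h\downarrow 0}\frac{a(x,t+h)-a(x,t)}{h}=\lim_{h\downarrow 0}\frac{a(\phi_x(t),h)}{h}=\mathcal{X}a(\phi_x(t)).
$$
Because $a(x,\cdot)$ has locally finite variation, this right derivative exists Lebesgue-a.e.\ on $\mathcal{I}_x$ and coincides a.e.\ with the density of the absolutely continuous part, yielding both the representation $\int_0^t\mathcal{X}a(\phi_x(s))\,\ud s$ and the local integrability of the integrand.

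For the pure jump part I would define $\Delta a$ on $J_a$ by $\Delta a(y):=a(x,t)-a(x,t-)$ for any $(x,t)$ with $y=\phi_x(t)$, and the one genuinely delicate step---which I expect to be the main obstacle---is to verify that this prescription is independent of the choice of $(x,t)$. Suppose $y=\phi_{x_1}(t_1)=\phi_{x_2}(t_2)$. Since $y\in J_a$ is not a confluent state, for all sufficiently small $h>0$ there is a unique state $z_h$ with $\phi_{z_h}(h)=y$, and therefore $\phi_{x_1}(t_1-h)=\phi_{x_2}(t_2-h)=z_h$. Additivity then forces
$$
a(x_i,t_i)-a(x_i,t_i-h)=a(z_h,h)\quad(i=1,2),
$$
a value independent of $i$; letting $h\downarrow 0$ delivers well-definedness. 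This is the single place at which the hypothesis that $J_a$ contains no confluent state is used.

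Having obtained $\mathcal{X}a$ and $\Delta a$, I would set
$$
a^{sc}(x,t):=a(x,t)-\int_0^t\mathcal{X}a(\phi_x(s))\,\ud s-\sum_{0<s\leqslant t}\Delta a(\phi_x(s))
$$
and read off the decomposition (\ref{eq.a Lebesgue}): by construction the two subtracted terms are the absolutely continuous and pure jump parts of $a(x,\cdot)$, so the remainder is the singularly continuous part. Finally, the SDS identity $\phi_x(s+r)=\phi_{\phi_x(s)}(r)$ combined with the additivity of $a$ shows directly that both the integral and the sum are themselves additive functionals of $\phi$, whence so is $a^{sc}$; measurability of $\mathcal{X}a$ and $\Delta a$ is routine, so the well-definedness argument really is the only nontrivial ingredient.
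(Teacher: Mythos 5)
The paper does not actually prove this theorem; it is imported verbatim from \cite{liu2017measure}, so there is no in-text proof to compare against. Judged on its own, your reconstruction is sound and isolates exactly the right nontrivial point: the pointwise Lebesgue decomposition of the locally BV function $a(x,\cdot)$ is classical, the identification of the a.c.\ density via $a(x,t+h)-a(x,t)=a(\phi_x(t),h)$ is correct (the right derivative exists and equals the density for a.e.\ $t$, and the convention $\mathcal{X}a=0$ elsewhere is harmless under the integral), and the additivity of each of the three pieces follows from the flow property by the change of variables you indicate.

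One small point of care in the well-definedness step: with the paper's definition, $y$ being \emph{non}-confluent only guarantees that the time-$h$ preimage of $y$ is unique for \emph{some} arbitrarily small $h$ (i.e.\ along a sequence $h_n\downarrow 0$), not necessarily for all sufficiently small $h$ as you assert. This does not damage the argument: along such a sequence $\phi_{x_1}(t_1-h_n)=\phi_{x_2}(t_2-h_n)=z_{h_n}$, hence $a(x_i,t_i)-a(x_i,t_i-h_n)=a(z_{h_n},h_n)$ is independent of $i$, and the c\`adl\`ag property lets you pass to the limit $h_n\downarrow 0$ to recover $a(x_i,t_i)-a(x_i,t_i-)$. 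You should also note (as your argument implicitly covers) that the same device handles a recurrent trajectory with $\phi_x(s)=\phi_x(s')$, forcing equal jump sizes at $s$ and $s'$, and that for $y\notin J_a$ every representation gives jump $0$, so $\Delta a$ is globally well defined. The only item left at the level of a wave of the hand is the $\E$-measurability of $\Delta a$ as a function on $\bar E$ (one needs a measurable selection of representatives $(x,t)$, or a limit representation such as $\Delta a(y)=\lim_n a(z_{h_n},h_n)$); this is standard but worth a sentence in a complete write-up.
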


The function $\Lambda$ defined by
\begin{equation*}
  \Lambda(x,t)=\int_{(0,t]}\frac{F(x,\ud s)}{F(x,s-)},\quad x\in E,\,t\in\mathcal{I}_x
\end{equation*}
is called the \emph{conditional hazard function}. It follows from (\ref{eq.F}) that $\Lambda$ is an additive functional of the SDS $\phi$. $\Lambda$ and $F$ are uniquely determined by each other. By Theorem \ref{thm.add Lebesgue}, $\Lambda$ has the Lebesgue decomposition
\begin{equation*}
  \Lambda(x,t)=\int_0^t\lambda(\phi_x(s))\ud s+\Lambda^{sc}(x,t)+\sum_{0<s\leqslant t} \Delta\Lambda(\phi_x(s))
\end{equation*}
for $x\in E$, $t\in\mathcal{I}_x$. Here we denote $\lambda=\mathcal{X}\Lambda$.

The transition kernel can be simplified in some circumstance. \cite{liu2017measure} proves the following lemma.
\begin{lemma}\label{lem.q=Q}
  There exists a measurable function $Q:E\times\E\mapsto[0,1]$ such that, for $x\in E$ and $t\in\mathcal{I}_x\setminus\{0\}$, if $\phi_x(t)$ is not a confluent state, then
\begin{equation*}
  q(x,t,B)=Q(\phi_x(t),B),\quad B\in\E.
\end{equation*}
\end{lemma}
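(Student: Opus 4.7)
The plan is to read off $Q$ from $q$ by declaring $Q(\phi_x(t),B):=q(x,t,B)$ whenever $\phi_x(t)$ is non-confluent and $t>0$, then to extend $Q$ arbitrarily (say, as a fixed Dirac mass) at every remaining point of $E$. The cocycle identity $q(x,s+t,B)=q(\phi_x(s),t,B)$ already guarantees consistency along a single trajectory, so the real content is to check that two different trajectories meeting at a non-confluent state assign the same transition measure there.

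The well-definedness step I would run as follows. Suppose $y=\phi_x(t)=\phi_{x'}(t')$ with $t,t'>0$, and without loss of generality $t\leqslant t'$. Because $y$ is non-confluent, I can find an $s>0$ with $s<t$ so small that the fibre $\{z\in E:\phi_z(s)=y\}$ contains at most one element. Writing $y=\phi_{\phi_x(t-s)}(s)=\phi_{\phi_{x'}(t'-s)}(s)$ by (\ref{eq.SDS}), uniqueness of the pre-image forces $\phi_x(t-s)=\phi_{x'}(t'-s)$. The cocycle identity for $q$ then gives
\begin{equation*}
q(x,t,B)=q(\phi_x(t-s),s,B)=q(\phi_{x'}(t'-s),s,B)=q(x',t',B),
\end{equation*}
so $Q(y,B)$ is unambiguously defined on the non-confluent portion of the image of $\phi$ at positive times. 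The relation $q(x,t,B)=Q(\phi_x(t),B)$ is then a direct rewriting of the definition.

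The main obstacle is measurability of $y\mapsto Q(y,B)$, since the construction rests on a non-canonical choice of the pre-image $(x,t)$. Since $(E,\E)$ is a Borel space, I would resolve this via a measurable selection argument: for a fixed small $s>0$, apply a selection theorem to the multifunction $y\mapsto\{z\in E:\phi_z(s)=y\}$ on the Borel subset where it is non-empty, obtaining a measurable selector $z_s(\cdot)$, and set $Q(y,B):=q(z_s(y),s,B)$ on that set. The well-definedness step above shows the value does not depend on the particular small $s$ chosen, and joint measurability of $(z,s)\mapsto q(z,s,B)$ then transfers through the measurable selector to give the required measurability of $Q$.
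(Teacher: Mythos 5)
The paper does not actually prove this lemma itself --- it is quoted from \cite{liu2017measure} --- so there is no in-paper argument to compare yours against; I am judging the proposal on its own. Your well-definedness step is correct and is the real content of the identification: choosing $s<t\wedge t'$ so small that the fibre $\{z\in E:\phi_z(s)=y\}$ is a singleton (exactly the negation of confluence at $y$), deducing $\phi_x(t-s)=\phi_{x'}(t'-s)$ from (\ref{eq.SDS}), and transporting $q$ along the trajectory via the cocycle identity is sound, modulo the routine bookkeeping that all times involved remain in the relevant sets $\mathcal{I}_x$.

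The gap is in the measurability step, and it is genuine. First, a single fixed $s$ cannot serve all non-confluent states: non-confluence of $y$ only supplies some $s=s_y$ depending on $y$, and a state $y=\phi_x(t)$ reached only at times $t<s$ need not lie in the range of $z\mapsto\phi_z(s)$ at all, so your selector $z_s(y)$ may simply not exist there. Second, the set where the multifunction $y\mapsto\{z:\phi_z(s)=y\}$ is non-empty is the image of a Borel map between Borel spaces, hence in general only analytic rather than Borel, so restricting to ``the Borel subset where it is non-empty'' is not available. Third, for a Borel multifunction with arbitrary fibres the Jankov--von Neumann theorem yields only a universally (analytically) measurable selector, not an $\E$-measurable one, so even where the construction applies it does not deliver the measurability of $y\mapsto Q(y,B)$ that the lemma asserts. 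To close this you would need either extra structure on $\phi$ (e.g., countable or $\sigma$-compact fibres, plus a countable exhaustion over $s=1/n$) to invoke a Borel selection theorem, or a route that builds measurability in from the start --- for instance defining $Q(y,\cdot)$ as a regular conditional distribution of $X_{\tau_1}$ given $X_{\tau_1}^-=y$, which exists because $(E,\E)$ is a Borel space, and then using your consistency argument only to identify that kernel with $q(x,t,\cdot)$ at non-confluent states.
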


Throughout this paper, we assume that $J_\Lambda$ contains no confluent state. Thus, $(\phi,\Lambda,Q)$ is also referred as the characteristic triple of the general PDMP $X$.

For $a\in \mathfrak{A}_\phi$, define an operator $L$ such that $L(a)=\{L(a)_t\}_{0\leqslant t<\tau}$ is a process satisfying that
\begin{equation}\label{eq.L(a)}
  \begin{array}{l}
    L(a)_0=0, \\
    L(a)_t=L(a)_{\tau_n}+a(X_{\tau_n},t-\tau_n),\quad
\tau_n< t\leqslant\tau_{n+1},\;n=0,1,2,\dots
  \end{array}
\end{equation}
According to \cite{liu2017measure}, $L(a)$ is a predictable additive functional of the general PDMP $X$.

Throughout this paper we denote the space of measurable functions $f:\bar{E}\mapsto\R$ by $\mathcal{M}(\bar{E})$, and add a subscript $b$ to denote to restriction to bounded functions.
For a general PDMP $X$, Liu et al.\cite{liu2017measure} presents a new form of generator called the \emph{measure-valued generator} $\A$ such that $\A f\in \mathfrak{A}_\phi$. 
Notice that $\A f(x,\cdot)$ is a signed measure on $\mathcal{I}_x$ for any fixed $x\in E$. That is why we call it `measure-valued'. $\D(\A)$ denotes the \emph{domain} of the measure-valued generator $\A$ which consists all the functions $f\in\mathcal{M}(\bar{E})$
satisfying:
\begin{enumerate}[(i)]
  \item $f$ is of \emph{locally path-finite-variation}, i.e., $f(\phi_x(\cdot))$ is of finite variation on any closed subinterval of $\mathcal{I}_x$ for any $x\in E$;
  \item for any $x\in E,\,t\in\mathcal{I}_x$ we have
  \begin{equation}\label{eq.D(A) condition}
  \int_{(0,t]}\int_E\big|f(y)-f(\phi_x(s))\big|Q(\phi_x(s),\ud y)\Lambda(x,\ud s)<\infty.
  \end{equation}
\end{enumerate}
Thus
\begin{equation}\label{eq.U^f}
  U^f_t=f(X_t)-\int_{(0,t]}\ud L(\A f)_s,\quad 0\leqslant t<\tau
\end{equation}
is a $\Prb$-local martingale prior to $\tau$ for $f\in\D(\A)$.
Especially, if
\begin{equation}\label{eq.martingale condition}
  \Ep_x\left[\sum_{n=1}^{\infty}\Big|f(X_{\tau_n})-f(X_{\tau_n}^-)\Big|\right]<\infty,\quad x\in E,\,t\in\R_+,
\end{equation}
$U^f$ is a $\Prb$-martingale.
Moreover, if $f\in\D(\A)$, then for $x\in E$, $t\in\mathcal{I}_x$,
\begin{equation}\label{eq.generator}
  \A f(x,\ud t)=D f(x,\ud t)+\Lambda(x,\ud t)\int_E[f(y)-f(\phi_x(t))]Q(\phi_x(s),\ud y),
\end{equation}
where
\begin{equation*}\label{eq.Df}
  D f(x,t) = f(\phi_x(t))-f(x)
\end{equation*}
is the additive functional of the SDS $\phi$ induced by function $f$.

Since $D f$ and $\Lambda$ are both additive functionals of the SDS $\phi$,
following Theorem \ref{thm.add Lebesgue}, we have the Lebesgue decomposition of $\A f$,
\begin{equation*}
  \A f(x,t)=\int_0^t\mathcal{X}\A f(\phi_x(s))\ud s+\A^{sc} f(x,t)+\sum_{0<s\leqslant t}\Delta\A f(\phi_x(s))
\end{equation*}
for $x\in E$, $t\in\mathcal{I}_x$, where
\begin{align*}
  \mathcal{X}\A f(x) &=  \mathcal{X}f(x)
+\lambda(x)\int_E[f(y)-f(x)]Q(x,\ud y),\quad x\in E,\\
  \Delta\A f(x) &= \Delta f(x)
+\Delta\Lambda(x) \int_E[f(y)-f(x)]Q(x,\ud y),\quad x\in\bar{E},
\end{align*}
and
\begin{equation*}
  \A^{sc}f(x,\ud t)=D^{sc}f(x,\ud t)
+\Lambda^{sc}(x,\ud t)\int_E[f(y)-f(\phi_x(t))]Q(\phi_x(t),\ud y).
\end{equation*}
Here we simply denote $\mathcal{X}f=\mathcal{X}Df$ and $\Delta f=\Delta Df$. And denote
\begin{equation*}
  \A^{ac}f(x,t)=\int_0^t\mathcal{X}\A f(\phi_x(s))\ud s \quad\hbox{and}\quad
  \A^{pd}f(x,t)=\sum_{0<s\leqslant t}\Delta\A f(\phi_x(s))
\end{equation*}
for $x\in E$, $t\in\mathcal{I}_x$.

\section{Exponential martingale}

Consider a regular general PDMP $X=\{X_t\}_{t\geqslant 0}$ defined on a probability space $(\Omega,\F,\Prb)$. 
Define an auxiliary process $X^-=\{X^-_t\}_{t\geqslant 0}$ by
\begin{equation}\label{eq.X-}
  X_t^-=X_0\bbbone_{[t=0]}+\sum_{n=0}^\infty\phi_{X_{\tau_n}}(t-\tau_n) \bbbone_{\{\tau_n< t\leqslant \tau_{n+1}\}}.
\end{equation}
Obviously, $X_t^-=X_t$ if $t\neq\tau_n$, $n=1,2,\dots$ In other words, $X^-$ modifies the values of $X$ only at random jumping times. And it is easy to check that $X^-$ is a predictable process.

For a linear operator $\A:\mathcal{M}(\bar{E})\mapsto \mathfrak{A}_\phi$ and each strictly positive function $f\in\mathcal{M}(\bar{E})$, we define a process $M^f=\{M^f_t\}_{t\geqslant 0}$ by
\begin{equation}\label{eq.M^f}
  M^f_t=\frac{f(X_t)}{f(X_0)}
\exp\left[-\int_{(0,t]}\frac{\ud L(\A^cf)_s}{f(X_{s-})}\right]
\prod_{0<s\leqslant t}\left[1+\frac{\Delta\A f(X_s^-)}{f(X_{s-})}\right]^{-1},
\end{equation}
where $\A^cf=\A^{ac}f+\A^{sc}f$. And $M^f$ can also be written as
\begin{equation*}
  M^f_t=\frac{f(X_t)}{f(X_0)}
  \exp\left[-\int_{(0,t]}\frac{\ud L(\A^cf)_s}{f(X_{s-})}
  -\sum_{0<s\leqslant t}\log\left[1+\frac{\Delta\A f(X_s^-)}{f(X_{s-})}\right]\right]
\end{equation*}
If, for some function $h$, the process $M^h$ is a martingale, then it is said to be an \emph{exponential martingale}. In this case, we call $h$ a \emph{good} function.

\begin{remark}\label{rm.Stieltjes.exp}
  For every c\`adl\`ag process $A=\{A_t\}_{t\geqslant 0}$ with $A_0=0$ one can decompose $A_t=A_t^c+A_t^{pd}$, where $A^c$ denotes the continuous part of $A$, and $A^{pd}$ stands for the purely discontinuous part.
  We can pathwise define the \emph{Stieltjes exponential} $\sexp$ by
  $$\sexp(A_t)=\exp(A^c_t)\prod_{0<s\leqslant t}(1+A_s-A_{s-}), \quad t\geqslant 0,$$
  which is also called the \emph{stochastic exponential} or \emph{Dol\'eans-Dade exponential} (see \cite{Protter2004Stochastic}).
  In this sense, (\ref{eq.M^f}) is equivalent to
  $$M^f_t=\frac{f(X_t)}{f(X_0)}\left[\sexp\left(\int_{(0,t]}
  \frac{\ud L(\A f)_s}{f(X_{s-})}\right)\right]^{-1}.$$
  This is why we still call it an exponential martingale when it is a martingale.
\end{remark}

Define
\begin{align*}
  \mathcal{M}^*(\A)=\Big\{& f\in\mathcal{M}(\bar{E}):f(x)\neq 0,\;\;
\int_{(0,t]}\big|\A f\big|(x,\ud s)<\infty, \\
&\int_{(0,t]}\frac{\big|\A^cf\big|(x,\ud s)}{\big|f(\phi_x(s-))\big|}<\infty,\;\;
0<\prod_{0<s\leqslant t}\left[1+\frac{\Delta\A f(\phi_x(s))}{f(\phi_x(s-))}\right]<\infty\\
&
\hbox{ for all } x\in E,\,t\in\mathcal{I}_x\Big\}.
\end{align*}
Moreover, for a measure-valued generator $\A$, we let
\begin{equation*}
  \D^*(\A)=\mathcal{M}^*(\A)\cap\D(\A).
\end{equation*}
Thus, $M^f$ defined as (\ref{eq.M^f}) makes sense for $f\in\mathcal{M}^*(\A)$.

The following lemma is an extension of \cite[Proposition 3.2]{Ethier1986Markov} and \cite[Lemma 3.1]{palmowski2002technique}.

\begin{lemma}\label{lem.D&E.lm}
  Let $f\in\mathcal{M}^*(\A)$. Then the process $U^f=\{U^f_t\}_{t\geqslant 0}$ is a local martingale if and only if $M^f=\{M^f_t\}_{t\geqslant 0}$ is a local martingale.
\end{lemma}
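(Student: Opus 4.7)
The plan is to rewrite the statement as an equivalence between stochastic integrals and then invoke the standard preservation theorem for stochastic integrals against a local martingale.

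First I would introduce the auxiliary process
\begin{equation*}
  Y_t = \sexp\left(\int_{(0,t]} \frac{\ud L(\A f)_s}{f(X_{s-})}\right),
\end{equation*}
so that by construction $M^f_t = f(X_t)/(f(X_0)\, Y_t)$. Because $L(\A f)$ is a predictable additive functional of locally finite variation (from the results recalled from \cite{liu2017measure}) and because membership in $\mathcal{M}^*(\A)$ guarantees both the integrability of $|\A^c f|/|f|$ along the trajectory and that the multiplicative correction stays in $(0,\infty)$, the Stieltjes exponential $Y$ is well-defined, predictable, strictly positive, and of locally finite variation; it solves the linear Stieltjes equation
\begin{equation*}
  Y_t = 1 + \int_{(0,t]} Y_{s-}\, \ud L(\A f)_s \big/ f(X_{s-}).
\end{equation*}

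Next I would apply the integration-by-parts formula to the product $f(X_t) = f(X_0)\, M^f_t\, Y_t$. Finite variation of $Y$ reduces the covariation to the pure-jump bracket $[M^f,Y]_t = \sum_{0 < s \leqslant t} \Delta M^f_s\, \Delta Y_s$. Substituting the Stieltjes equation for $Y$ and using the key cancellation $M^f_{s-} Y_{s-}/f(X_{s-}) = 1/f(X_0)$, the $\ud L(\A f)$-term on one side of the identity collapses to $L(\A f)_t$, which exactly cancels the $-L(\A f)_t$ inside $U^f$. Combining the resulting $\int Y_{s-}\, \ud M^f_s$ with the jump bracket should produce
\begin{equation*}
  U^f_t - f(X_0) = f(X_0) \int_{(0,t]} Y_s\, \ud M^f_s,
\end{equation*}
and therefore, by the strict positivity of $Y$,
\begin{equation*}
  M^f_t - 1 = \frac{1}{f(X_0)} \int_{(0,t]} Y_s^{-1}\, \ud U^f_s.
\end{equation*}

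From here the lemma follows: since $Y$ and $Y^{-1}$ are predictable and can be made locally bounded by a suitable localization, each identity shows that one process being a local martingale forces the other to be a local martingale. I expect the main technical obstacle to be the integration-by-parts step, namely identifying $[M^f,Y]$ as a pure-jump process and verifying that $\int Y_{s-}\, \ud M^f_s + [M^f,Y]_t$ really behaves like $\int Y_s\, \ud M^f_s$ with a predictable, locally bounded integrand. A secondary issue is constructing a single localization sequence that simultaneously controls $Y$, $Y^{-1}$, and $M^f$ (or $U^f$), though the positivity and locally finite variation of $Y$ make this routine.
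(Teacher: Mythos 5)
Your proposal is correct and follows essentially the same route as the paper: the paper sets $Y^f_t=1/Y_t$, notes $\ud Y^f_t=-\frac{Y^f_{t-}}{f(X_{t-})+\Delta\A f(X_t^-)}\,\ud L(\A f)_t$, and integration by parts gives $\ud M^f_t=\frac{M^f_{t-}}{f(X_{t-})+\Delta\A f(X_t^-)}\,\ud U^f_t$, which (since $M^f_{t-}/(f(X_{t-})+\Delta\A f(X_t^-))=1/(f(X_0)Y_t)$) is exactly your identity $\ud U^f_t=f(X_0)Y_t\,\ud M^f_t$ and its inverse. Your use of the linear Stieltjes equation for $Y$ in place of the explicit differentiation of the exponential-product form is only a cosmetic difference, and your closing remarks on predictability and local boundedness of the integrands address a point the paper leaves implicit.
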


\begin{proof}
Assume that the process $U^f$ is a local martingale. For $f\in\mathcal{M}^*(\A)$, denote
\begin{equation*}
  Y^f_t=\exp\left[-\int_{(0,t]}\frac{\ud L(\A^cf)_s}{f(X_{s-})}\right]
\prod_{0<s\leqslant t} \left[1+\frac{\Delta\A f(X_s^-)}{f(X_{s-})}\right]^{-1}.
\end{equation*}
Thus we have
$$\ud Y^f_t=-\frac{Y^f_{t-}}{f(X_{t-})+\Delta\A f(X_t^-)}\ud L(\A f)_t.$$
By the formula of integration by parts,
\begin{align*}
  \ud M^f_t=&\frac{1}{f(X_0)}\Big\{Y^f_{t-}\ud f(X_t)+f(X_t)\ud Y^f_t\Big\} \\
           =&\frac{Y^f_{t-}}{f(X_0)}\left\{\ud f(X_t)-\frac{f(X_t)\,\ud L(\A f)_t} {f(X_{t-})+\Delta\A f(X_t^-)}\right\} \\
           =&\frac{Y^f_{t-}}{f(X_0)}\left\{\ud f(X_t)+\frac{L(\A f)_{t-}\ud f(X_t)-\ud\big(f(X_t)\, L(\A f)_t\big)}{f(X_{t-})+\Delta\A f(X_t^-)}\right\} \\
           =&\frac{Y^f_{t-}}{f(X_0)} \frac{f(X_{t-})\,\ud f(X_t)+L(\A f)_t\,\ud f(X_t)-\ud\big(f(X_t)\, L(\A f)_t\big)}{f(X_{t-})+\Delta\A f(X_t^-)} \\
           =&\frac{Y^f_{t-}}{f(X_0)} \frac{f(X_{t-})\,\ud f(X_t)-f(X_{t-})\,\ud L(\A f)_t}{f(X_{t-})+\Delta\A f(X_t^-)} \\
           =&\frac{M^f_{t-}}{f(X_{t-})+\Delta\A f(X_t^-)}\ud U^f_t.
\end{align*}
Hence the process $M^f$ is a local martingale.

Now, assume that $M^f$ is a local martingale. We also have
\begin{equation*}
  \ud U^f_t=\frac{f(X_{t-})+\Delta\A f(X_t^-)}{M^f_{t-}}\ud M^f_t.
\end{equation*}
Thus $U^f$ is a local martingale. This completes the proof.
\end{proof}

Now let $\A$ be the measure-valued generator of the general PDMP $X$ with the domain $\D(\A)$. With the terminology in \cite{liu2017measure}, we give the different forms of the exponential martingale $M^f$ and its domain.

\begin{corollary}
  For a quasi-Hunt PDMP $X$, let $f\in\D^*(\A)$.
  \begin{equation}\label{eq.M^f.Hunt}
    M^f_t=\frac{f(X_t)}{f(X_0)}\exp\left[-\int_{(0,t]}\frac{\ud L(\A^cf)_s}{f(X_{s-})}\right],\quad t\geqslant 0
  \end{equation}
  if and only if $f$ is path-continuous, that is, $f(\phi_x(\cdot))$ is continuous on $\mathcal{I}_x$ for each $x\in E$.
\end{corollary}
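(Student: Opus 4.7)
My plan is to reduce (\ref{eq.M^f.Hunt}) to the vanishing of the purely discontinuous factor
$$P^f_t:=\prod_{0<s\leqslant t}\left[1+\frac{\Delta\A f(X_s^-)}{f(X_{s-})}\right]^{-1}$$
appearing in (\ref{eq.M^f}): the two expressions coincide exactly when $P^f_t\equiv 1$ for all $t\geqslant 0$. Since $P^f$ is right-continuous, $P^f_0=1$, and all of its multiplicative updates occur on the at-most-countable set $\{s>0:\Delta\A f(X_s^-)\neq 0\}$, the monotonicity of the index set $(0,t]$ in $t$ forces \emph{each} individual factor (rather than some combination thereof) to equal $1$. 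Thus $P^f\equiv 1$ almost surely is equivalent to $\Delta\A f(X_s^-)=0$ for every $s>0$.

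Next I would invoke the atomic-part formula recalled just before Section~3,
$$\Delta\A f(x)=\Delta f(x)+\Delta\Lambda(x)\int_E[f(y)-f(x)]\,Q(x,\ud y),\qquad x\in\bar{E}.$$
The defining feature of a quasi-Hunt PDMP in \cite{liu2017measure} is the absence of atoms in the conditional hazard function $\Lambda$, i.e.\ $\Delta\Lambda\equiv 0$. Under this hypothesis $\Delta\A f\equiv\Delta f$, so the above condition collapses to $\Delta f(X_s^-)=0$ for all $s>0$ a.s. The \emph{if} direction is then immediate: path-continuity of $f$ means $f(\phi_x(t))=f(\phi_x(t-))$ for every $x\in E$ and $t\in\mathcal{I}_x$, which is exactly $\Delta f\equiv 0$ along every trajectory, and hence $P^f\equiv 1$.

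For the \emph{only if} direction, I fix an arbitrary starting state $x\in E$ and observe that $X_s^-=\phi_x(s)$ throughout $(0,\tau_1]$. Because $F(x,s)>0$ for every $s$ in the interior of $\mathcal{I}_x$, each such $s_0$ satisfies $\Prb(\tau_1>s_0\mid X_0=x)>0$, so the almost-sure identity forces $\Delta f(\phi_x(s_0))=0$ for every $s_0\in\mathcal{I}_x$; letting $x$ range over $E$ yields path-continuity. The main subtlety throughout is the very first step---ruling out cancellation in an identity of the form $P^f\equiv 1$---but this is handled by right-continuity of $P^f$ and its cumulative monotonicity in $t$ as indicated above; the remainder is a direct unwinding of the definitions under the quasi-Hunt hypothesis.
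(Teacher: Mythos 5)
Your proof is correct and follows essentially the same route as the paper's: equate (\ref{eq.M^f}) with (\ref{eq.M^f.Hunt}) to force the jump product to be identically $1$, conclude $\Delta\A f=0$, and use the quasi-Hunt hypothesis $\Delta\Lambda=0$ together with the atomic part of (\ref{eq.generator}) to identify $\Delta\A f$ with $\Delta f$, and conversely. You in fact supply two details the paper's proof glosses over --- ruling out cancellation among the factors of the product, and passing from the almost-sure statement along the process to path-continuity along every trajectory via $F(x,s)>0$ --- which is a welcome strengthening rather than a deviation.
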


\begin{proof}
  $X$ is quasi-Hunt, which means that $\Delta\Lambda=0$. Comparing (\ref{eq.M^f}) with (\ref{eq.M^f.Hunt}), we have
  $$\prod_{0<s\leqslant t} \left[1+\frac{\Delta\A f(X_s^-)}{f(X_{s-})}\right]^{-1}= 1\quad \hbox{for all }t\geqslant 0,$$
  which means $\Delta\A f=0$. Then, following from the Lebesgue decomposition of (\ref{eq.generator}), we get $\Delta f=0$, i.e., $f$ is path-continuous.

  Conversely, the path-continuity of $f$ means $\Delta f=0$. Following from the Lebesgue decomposition of the measure-valued generator (\ref{eq.generator}), we have $\Delta\A f=0$. Thus (\ref{eq.M^f}) and (\ref{eq.M^f.Hunt}) are the same in this situation.
\end{proof}

\begin{corollary}\label{cor.M^f.P&R}
  For a general PDMP $X$, let $f\in\D^*(\A)$. If any one of the following conditions holds:
  \begin{enumerate}[(i)]
    \item $\A^{sc}f=\A^{pd}f=0$;
    \item $X$ is quasi-It\^o, and $f$ is absolutely path-continuous;
    \item for any $x\in E,t\in\mathcal{I}_x$, $\Lambda(x,t)=\int_0^t\lambda(\phi_x(s))\ud s+\bbbone_\Gamma(\phi_x(t))$, $f$ is absolutely path-continuous with boundary condition $f(x)=\int_Ef(y)Q(x,\ud y)$ for $x\in\Gamma$, where $\Gamma=\{\phi(c(x),x):F(x,c(x)-)>0,c(x)<\infty,x\in E\}$.
  \end{enumerate}
  Then
  \begin{equation}\label{eq.M^f.Ito}
    M^f_t=\frac{f(X_t)}{f(X_0)}\exp\left[-\int_0^t\frac{\mathcal{X}\A f(X_s)}{f(X_s)}\ud s\right],\quad t\geqslant 0.
  \end{equation}
\end{corollary}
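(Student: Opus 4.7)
The plan is to reduce all three cases to a single common simplification of the general expression (\ref{eq.M^f}). The key observation is that, by the Lebesgue decomposition of $\A f$ recalled just before this corollary, once we know that $\A^{sc}f=0$ and $\A^{pd}f=0$ (equivalently $\Delta\A f\equiv 0$ on $\bar E$), the product factor in (\ref{eq.M^f}) equals $1$, and $\A^c f$ reduces to $\A^{ac}f$ with density $\mathcal{X}\A f$ along trajectories of $\phi$. So the first step is to verify condition (i) directly, and then to show that both (ii) and (iii) are in fact special cases of (i).

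For (i), I would argue as follows. Since $\A^{sc}f=\A^{pd}f=0$, we have $\A^cf=\A^{ac}f$ and $\Delta\A f=0$, so each factor in $\prod_{0<s\leqslant t}[1+\Delta\A f(X_s^-)/f(X_{s-})]^{-1}$ is $1$. Using the defining relation (\ref{eq.L(a)}) of $L$ together with the representation $\A^{ac}f(x,t)=\int_0^t\mathcal{X}\A f(\phi_x(s))\,\mathrm ds$, I would verify that $L(\A^cf)_t=\int_0^t \mathcal{X}\A f(X_s)\,\mathrm ds$ (the integrand is the same as the one obtained by piecing the absolutely continuous pieces along segments $(\tau_n,\tau_{n+1}]$ and the change of variable $s\mapsto s-\tau_n$). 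Finally, since the Lebesgue integral is unchanged if $X_{s-}$ is replaced by $X_s$ on the countable set of jumps, the ratio $\mathrm dL(\A^cf)_s/f(X_{s-})$ inside the exponential can be written as $\mathcal{X}\A f(X_s)/f(X_s)\,\mathrm ds$, yielding (\ref{eq.M^f.Ito}).

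It then remains to show (ii)$\Rightarrow$(i) and (iii)$\Rightarrow$(i). For (ii), quasi-It\^o means $\Lambda$ is absolutely continuous along paths, i.e.\ $\Lambda^{sc}=0$ and $\Delta\Lambda\equiv 0$; absolute path-continuity of $f$ means $D^{sc}f=0$ and $\Delta f=0$. Feeding these into the explicit Lebesgue decomposition of $\A f$ given just before the corollary immediately yields $\A^{sc}f=0$ and $\Delta\A f=0$, which is condition (i). For (iii), the hypothesis on $\Lambda$ gives $\Lambda^{sc}=0$ and $\Delta\Lambda(\phi_x(t))=\mathbb 1_\Gamma(\phi_x(t))$, while $f$ being absolutely path-continuous still gives $D^{sc}f=0$ and $\Delta f=0$. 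The Lebesgue decomposition of (\ref{eq.generator}) then reduces to
\begin{equation*}
\A^{sc}f=0,\qquad \Delta\A f(x)=\mathbb 1_\Gamma(x)\int_E[f(y)-f(x)]Q(x,\mathrm dy).
\end{equation*}
The boundary condition $f(x)=\int_E f(y)Q(x,\mathrm dy)$ on $x\in\Gamma$ forces the integral to vanish precisely on the support of the indicator, so $\Delta\A f\equiv 0$, and condition (i) again holds. The only real subtlety I anticipate is the last step, checking that the boundary condition matches \emph{exactly} the points where $\Delta\Lambda\neq 0$; this is built into the definition of $\Gamma$ as $\{\phi_x(c(x)):F(x,c(x)-)>0,\,c(x)<\infty\}$, and follows from the fact that these are the hitting points of the deterministic boundary where the indicator jumps.
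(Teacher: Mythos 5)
Your proposal is correct and follows essentially the same route as the paper: verify case (i) directly from the Lebesgue decomposition of $\A f$, then reduce (ii) and (iii) to (i) by showing $\A^{sc}f=\A^{pd}f=0$ in each case. Your treatment of (iii) is in fact slightly more explicit than the paper's, since you spell out that the boundary condition $f(x)=\int_E f(y)Q(x,\ud y)$ on $\Gamma$ is what kills the term $\Delta\Lambda(x)\int_E[f(y)-f(x)]Q(x,\ud y)$, a step the paper leaves implicit.
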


\begin{proof}
  (i) If $\A^{sc}f=\A^{pd}f=0$, we have
  $$\int_{(0,t]}\frac{\ud L(\A^cf)_s}{f(X_{s-})}
  =\int_{(0,t]}\frac{\mathcal{X}\A f(X_s)}{f(X_s)}\ud s \quad\hbox{and}\quad
  \prod_{0<s\leqslant t}\left[1+\frac{\Delta\A f(X_s^-)}{f(X_{s-})}\right]^{-1}=1 $$
  for all $t\geqslant 0$. Thus we get (\ref{eq.M^f.Ito}).

  (ii) $X$ is quasi-It\^o, which is equivalent to that $\Lambda=\Lambda^{ac}$. If $f$ is absolutely path-continuous, then $D^{sc}f=D^{pd}f=0$. Therefore, we get $\A^{sc}f=\A^{pd}f=0$.

  (iii) Following the condition, we have $\Lambda^{sc}=0$ and
  $$\Delta\Lambda(x)=\left\{
                       \begin{array}{ll}
                         1, & x\in\Gamma; \\
                         0, & x\in E\setminus\Gamma.
                       \end{array}
                     \right.
  $$
For an absolutely path-continuous function $f$ with $f(x)=\int_Ef(y)Q(x,\ud y)$ for $x\in\Gamma$, we have $D^{sc}f=D^{pd}f=0$. Then, it follows from the Lebesgue decomposition of $\A f$ that $\A^{sc}f=\A^{pd}f=0$.
\end{proof}

Note that the condition (iii) is the case of PDMPs in the sense of \cite{davis1993markov}. And the condition (ii) and (iii) are both special cases of (i).

\begin{corollary}
  For a quasi-step PDMP $X$, let $f\in\D^*(\A)$.
\begin{equation}\label{eq.M^f.step}
  M^f_t=\frac{f(X_t)}{f(X_0)}\prod_{0<s\leqslant t}\left[1+\frac{\Delta\A f(X_s^-)}{f(X_{s-})}\right]^{-1},\quad t\geqslant 0
\end{equation}
if and only if $f$ is a path-step function.
\end{corollary}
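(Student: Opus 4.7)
The plan is to mirror the proof strategy used for the quasi-Hunt corollary, swapping the roles of the continuous and purely discontinuous parts. By analogy with the earlier definitions (``quasi-Hunt'' meaning $\Delta\Lambda=0$ and ``quasi-It\^o'' meaning $\Lambda=\Lambda^{ac}$), I read ``quasi-step'' to mean $\Lambda=\Lambda^{pd}$, i.e.\ $\lambda\equiv 0$ and $\Lambda^{sc}\equiv 0$. Under this hypothesis, the Lebesgue decomposition of $\A f$ presented just after (\ref{eq.generator}) simplifies to
\begin{equation*}
  \mathcal{X}\A f(x)=\mathcal{X}f(x),\qquad \A^{sc}f(x,\ud t)=D^{sc}f(x,\ud t),
\end{equation*}
so $\A^c f=0$ if and only if $\mathcal{X}f\equiv 0$ and $D^{sc}f\equiv 0$, which is exactly the statement that $f(\phi_x(\cdot))$ is a pure step function on every trajectory.

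For the ``only if'' direction, suppose $M^f$ is given by (\ref{eq.M^f.step}). Comparing with (\ref{eq.M^f}), I would argue that
\begin{equation*}
  \exp\left[-\int_{(0,t]}\frac{\ud L(\A^c f)_s}{f(X_{s-})}\right]=1\quad\hbox{for all }t\geqslant 0
\end{equation*}
almost surely, which forces the continuous predictable process $L(\A^c f)$ to vanish identically. By definition (\ref{eq.L(a)}), applied on each interval $[\tau_n,\tau_{n+1})$, this forces $\A^c f(x,\cdot)\equiv 0$ on $\mathcal{I}_x$ for every $x\in E$. Combining with the reduced decomposition above then gives $\mathcal{X}f\equiv 0$ and $D^{sc}f\equiv 0$, hence $f$ is path-step.

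For the ``if'' direction, I would start from $f$ being path-step, so $Df$ is purely discontinuous, i.e.\ $\mathcal{X}f\equiv 0$ and $D^{sc}f\equiv 0$. Plugging these into the reduced formulas yields $\A^{ac}f=\A^{sc}f=0$, hence $\A^c f=0$, so the exponential factor in (\ref{eq.M^f}) collapses to $1$ and the two expressions for $M^f$ coincide.

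The step I expect to need the most care is the passage from the identity $\exp[-\int_{(0,t]}\ud L(\A^c f)_s/f(X_{s-})]=1$ almost surely to the pointwise conclusion $\A^c f(x,\cdot)\equiv 0$ on $\mathcal{I}_x$ for every $x\in E$. Since $L(\A^c f)$ is constructed trajectorywise on the deterministic pieces $[\tau_n,\tau_{n+1})$ and the denominator $f(X_{s-})$ is strictly positive by the definition of $\mathcal{M}^*(\A)$, the equation localizes on $[0,\tau_1)$ under $\Prb_x$ and yields $\A^c f(x,t)=0$ for $t$ in the support of $\tau_1$; the extension to all of $\mathcal{I}_x$ follows from the additivity of $\A^c f$ and the regularity of $X$. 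Once this localization is made precise, the rest is essentially a bookkeeping exercise using the Lebesgue decomposition already established in the preliminary section.
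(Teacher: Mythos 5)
Your proposal is correct and follows essentially the same route as the paper's own proof: identify ``quasi-step'' with $\Lambda=\Lambda^{pd}$, observe that the exponential factor in (\ref{eq.M^f}) reduces to $1$ exactly when $\A^{ac}f=\A^{sc}f=0$, and use the Lebesgue decomposition of $\A f$ (with $\lambda\equiv 0$ and $\Lambda^{sc}\equiv 0$) to translate this into $D^{ac}f=D^{sc}f=0$, i.e.\ $f$ path-step. Your extra care about passing from the almost-sure identity for $L(\A^c f)$ to the pointwise vanishing of $\A^c f(x,\cdot)$ is a refinement the paper silently elides, but it does not change the argument.
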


\begin{proof}
  For a quasi-step PDMP $X$, $\Lambda=\Lambda^{pd}$. If (\ref{eq.M^f.step}) holds, we have
$$\int_{(0,t]}\frac{\ud L(\A^cf)_s}{f(X_{s-})}=0\quad \hbox{for all }t\geqslant 0,$$
which means $\A^{ac}f=\A^{sc}f=0$. Then, following from the Lebesgue decomposition of $\A f$, we have $D^{ac}f=D^{sc}f=0$, that is, $f$ is a path-step function.

Conversely, if $f$ is a path-step function, then we get $\A^{ac}f=\A^{sc}f=0$ for a quasi-step PDMP $X$. Hence, the proof is completed.
\end{proof}


\begin{proposition}
  For a strictly positive function $h\in\D(\A)$, if $\A h(x,\cdot)$ only has a finite number of discontinuous points on $\mathcal{I}_x$ for every $x\in E$,
  $$h\in\mathcal{M}_b(\bar{E}),\quad\inf_{x\in\bar{E}}b(x)>-1,\quad\sup_{x\in\bar{E}}b(x)<\infty,$$
  and either one of the following two conditions holds:
\begin{enumerate}[(C1)]
  \item $a\in \mathfrak{A}_\phi^{loc}$;
  \item $\A^ch\in \mathfrak{A}_\phi^{loc}$, $\displaystyle\inf_{x\in\bar{E}}h(x)>0$,
\end{enumerate}
where
$$a(x,t)=\int_{(0,t]}\frac{\A^ch(x,\ud s)}{h(\phi_x(s-))},\quad
b(x)=\frac{\Delta\A h(x)}{h(x)-\Delta h(x)},$$
then $h$ is a good function.
\end{proposition}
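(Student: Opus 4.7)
The plan is first to verify that $h\in\D^*(\A)=\mathcal{M}^*(\A)\cap\D(\A)$ so that $M^h$ from (\ref{eq.M^f}) is well defined, second to apply Lemma~\ref{lem.D&E.lm} to identify $M^h$ as a positive local $\Prb$-martingale, and third to promote $M^h$ to a true martingale by a localisation--domination argument using the boundedness hypotheses.

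For the first step, three non-trivial conditions in the definition of $\mathcal{M}^*(\A)$ need to be checked. Since $h\in\D(\A)$ is locally of path-finite variation and satisfies (\ref{eq.D(A) condition}), the decomposition (\ref{eq.generator}) immediately gives $\int_{(0,t]}|\A h|(x,\ud s)<\infty$. The integrability $\int_{(0,t]}|\A^c h|(x,\ud s)/|h(\phi_x(s-))|<\infty$ is precisely the local-finite-variation statement about $a$ given in (C1); under (C2) it follows from $\A^c h\in\mathfrak{A}_\phi^{loc}$ combined with $\inf_{\bar{E}}h>0$. Finally, the factor $1+b(\phi_x(s))=1+\Delta\A h(\phi_x(s))/h(\phi_x(s-))$ lies in a fixed compact subinterval of $(0,\infty)$ thanks to $\inf b>-1$ and $\sup b<\infty$, and since $\A h(x,\cdot)$ has only finitely many discontinuities on $\mathcal{I}_x$, the Stieltjes product reduces to finitely many non-trivial factors and belongs to $(0,\infty)$. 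Step two is then immediate: $h\in\D(\A)$ makes $U^h$ in (\ref{eq.U^f}) a $\Prb$-local martingale, so Lemma~\ref{lem.D&E.lm} yields that $M^h$ is a local martingale, and being strictly positive it is a supermartingale with $\Ep_x M^h_t\le 1$.

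The main obstacle is step three. My plan is to localise along $T_n=\tau_n\wedge n$, show that $M^h_{\cdot\wedge T_n}$ is a bounded (hence genuine) martingale with $\Ep_x M^h_{t\wedge T_n}=1$, and then pass to the limit $n\to\infty$. On $[0,T_n]$ the $X$-trajectory visits at most $n+1$ SDS segments, and on each segment $\A h$ has only finitely many discontinuities, so the Stieltjes product in $M^h_{t\wedge T_n}$ contains only finitely many non-trivial factors, each in $[(1+\sup b)^{-1},(1+\inf b)^{-1}]$. The exponential factor $\exp(-\int_{(0,\cdot]}\ud L(\A^c h)_s/h(X_{s-}))$ is controlled by (C1) directly, and under (C2) by combining $\A^c h\in\mathfrak{A}_\phi^{loc}$ with $\inf_{\bar{E}}h>0$; together with $h$ bounded (above, and also below under (C2)) this furnishes the needed a.s.\ bound. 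Passing $n\to\infty$ uses $T_n\uparrow\infty$ from regularity of $X$: Fatou's lemma together with the supermartingale inequality should promote $\liminf_n\Ep_x M^h_{t\wedge T_n}=1$ into $\Ep_x M^h_t=1$, and a positive supermartingale with constant expectation is a martingale, giving the conclusion. The delicate point will be ensuring uniform integrability of the stopped family, especially under (C1), where $h$ lacks a lower bound and the exponential factor is not a priori uniformly controlled in $n$; here the finiteness of the discontinuity set of $\A h$ per trajectory, combined with $\inf b>-1$ and the piecewise-deterministic structure of $X$, should produce the required domination.
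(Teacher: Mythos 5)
Your first two steps coincide with the paper's proof: membership $h\in\mathcal{M}^*(\A)$ is checked from (C1)/(C2) together with the bounds on $b$ and the finiteness of the discontinuity set of $\A h(x,\cdot)$ on each trajectory, and Lemma \ref{lem.D&E.lm} then gives that $M^h$ is a positive local martingale, hence a supermartingale. The divergence, and the problem, is in your step three. The paper does not localise: it invokes \cite[Theorem 51]{Protter2004Stochastic} and bounds the running supremum $\bar{M}^h_t=\sup_{s\leqslant t}|M^h_s|$ pathwise,
$$\bar{M}^h_t\leqslant\frac{H}{h(X_0)}\exp\left[\int_{(0,t]}\left|\frac{\ud L(\A^ch)_s}{h(X_{s-})}\right|\right]\left(1\vee(1+B_-)^{-KN_t}\right),$$
using $|h|\leqslant H$, the uniform bound $K$ on the number of discontinuities per trajectory, regularity of $X$ to guarantee $N_t<\infty$ a.s., and (C1) (to which (C2) is reduced) to make the exponential factor finite. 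Your route must ultimately produce the same kind of $L^1$ control of the supremum, and as written it does not.

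Concretely, two things fail in your step three. First, Fatou's lemma points the wrong way: from $\Ep_x M^h_{t\wedge T_n}=1$ and $M^h_{t\wedge T_n}\to M^h_t$ you only obtain $\Ep_x M^h_t\leqslant 1$, which the supermartingale property already gives for free; the equality $\Ep_x M^h_t=1$ requires uniform integrability of $\{M^h_{t\wedge T_n}\}_n$, which is exactly the domination you defer to ``the delicate point.'' Second, the assertion that $M^h_{\cdot\wedge T_n}$ is a \emph{bounded} martingale is not justified: on $\{N_t\leqslant n\}$ the Stieltjes product indeed has at most $K(n+1)$ non-trivial factors lying in a fixed compact subinterval of $(0,\infty)$, but the exponential factor $\exp\bigl[\int_{(0,t\wedge T_n]}|\ud L(\A^ch)_s|/h(X_{s-})\bigr]$ is controlled only $\omega$-by-$\omega$ through the local finite variation of $a(x,\cdot)$ for each fixed $x$; there is no uniform bound over the random post-jump states $X_{\tau_0},\dots,X_{\tau_n}$, so the stopped process need not be bounded and $T_n$ need not even be a localising sequence for $M^h$. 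To close the argument you should do what the paper does: establish $\Ep_x[\bar{M}^h_t]<\infty$ directly from the displayed pathwise bound (or otherwise exhibit an integrable dominating variable for the stopped family), after which the localisation becomes unnecessary.
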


\begin{proof}
  First we need to show that $h\in\mathcal{M}^*(\A)$.
  By Lemma \ref{lem.D&E.lm}, we know that $M^h$ is a local martingale if $h\in\D^*(\A)=\D(\A)\cap\mathcal{M}^*(\A)$. Then, applying \cite[Theorem 51]{Protter2004Stochastic}, we need to show that $\Ep[\bar{M}^h_t]<\infty$ for every $t\geqslant 0$ where $\bar{M}^h_t=\sup_{s\leqslant t}|M^h_s|$.

  It is obvious that $h(x)>0$ and $\A h\in\mathfrak{A}_\phi^{loc}$ for a strictly positive function $h\in\D(\A)$.
  Since $h\in\mathcal{M}_b(\bar{E})$, there exists $H>0$ such that $|h(x)|<H$ for all $x\in\bar{E}$.

  Let $k(x)$ denote the number of discontinuous points of function $\A h(x,\cdot)$ on $\mathcal{I}_x$, and $K=\max_{x\in E}k(x)<\infty$.
  Following the conditions $B_-=\inf_{x\in\bar{E}}b(x)>-1$ and $B_+=\sup_{x\in\bar{E}}b(x)<\infty$, we have
  $$1+b(x)\in[1+B_-,1+B_+]\subset(0,\infty)\quad\hbox{for all } x\in \bar{E}.$$
  Thus,
  \begin{align*}
    \prod_{0<s\leqslant t}\left[1+\frac{\Delta \A h(\phi_x(s))}{h(\phi_x(s-))}\right]
  =&\prod_{0<s\leqslant t}[1+b(\phi_x(s))]\\
  \in&\left[1\wedge(1+B_-)^K,1\vee(1+B_+)^K\right]\subset(0,\infty)
  \end{align*}
  for all $t\in\mathcal{I}_x$, $x\in E$.
  Furthermore,
  \begin{align*}
    \prod_{0<s\leqslant t}\left[1+\frac{\Delta\A h(X_s^-)}{h(X_{s-})}\right]^{-1}
    =&\prod_{0<s\leqslant t}\left[1+b(X_s^-)\right]^{-1}\\
    \in&\left[1\wedge(1+B_+)^{-KN_t},1\vee(1+B_-)^{-KN_t}\right]\subset(0,\infty)
  \end{align*}
  holds for $\Prb$-a.s. Here notice that the process $X$ is regular, thus $N_t<\infty$ $\Prb$-a.s. for all $t\geqslant 0$.

  First, we assume that the condition (C1) holds. Since $a\in\mathfrak{A}_\phi^{loc}$, we have $h\in\mathcal{M}^*(\A)$, and the process
  $\int_{(0,t]}\frac{\ud L(\A^ch)_s}{h(X_{s-})}$ has finite variation $\Prb$-a.s., i.e., $\int_{(0,t]}\left|\frac{\ud L(\A^ch)_s}{h(X_{s-})}\right|<\infty$ for every $t\geqslant 0$.
  Thus
  \begin{equation*}
    \bar{M}^h_t\leqslant \frac{H}{h(X_0)}\exp\left[\int_{(0,t]}\left|\frac{\ud L(\A^ch)_s}{h(X_{s-})}\right|\right]\left(1\vee(1+B_-)^{-KN_t}\right)<\infty \quad\Prb\hbox{-a.s.}
  \end{equation*}
  Then $\Ep[\bar{M}^h_t]<\infty$ for every $t\geqslant 0$, $M^h$ is a martingale, and $h$ is a good function.

  If the condition (C2) holds. Let $H_-=\inf_{x\in\bar{E}}h(x)>0$. Then we have $\frac{1}{h(x)}\leqslant\frac{1}{H_-}$ for any $x\in E$. And
  $$\int_{(0,t]}\left|\frac{\A^ch(x,\ud s)}{h(\phi_x(s-))}\right|
  \leqslant \frac{1}{H_-}\int_{(0,t]}\big|\A^ch(x,\ud s)\big|
  \leqslant \frac{1}{H_-}\int_{(0,t]}\big|\A h(x,\ud s)\big|<\infty,$$
  which means $a\in\mathfrak{A}_\phi^{loc}$. The conclusion can be got by condition (C1).
\end{proof}

\section{Change of measure}

%

Consider the general PDMP $X$ from Section 3. Throughout this section $h\in\D^*(\A)$ is a good function. Define a family of probability measures $\{\tilde{\Prb}_t\}_{t\geqslant 0}$ by
\begin{equation}\label{eq.newP}
  \frac{\ud\tilde{\Prb}_t}{\ud\Prb_t}=M^h_t,\quad t\geqslant 0.
\end{equation}
And the \emph{standard set-up} is satisfied, that is, there exists a unique probability measure $\tilde{\Prb}$ such that $\tilde{\Prb}_t=\tilde{\Prb}_{|\F_t}$.

\begin{theorem}\label{thm.new.Markov&triple}
  Let $X=\{X_t\}_{t\geqslant 0}$ be a general PDMP on $(\Omega,\F,\Prb)$ with measure-valued generator $(\A,\D(\A))$. We define a new probability measure $\tilde{\Prb}$ by \textnormal{(\ref{eq.newP})}. Then on the new probability space $(\Omega,\F,\tilde{\Prb})$, the process $X$ is a general PDMP with the the unchanged SDS $\phi$ and the following conditional hazard function and transition kernel
\begin{align}
  &\tilde{\Lambda}(x,\ud t)=\frac{Qh(\phi_x(t))}{h(\phi_x(t-))+\Delta\A h(\phi_x(t))} \Lambda(x,\ud t), \label{eq.new.Lambda}\\
  &\tilde{Q}(\phi_x(t),\ud y)=\frac{h(y)}{Qh(\phi_x(t))}Q(\phi_x(t),\ud y),\label{eq.new.Q}
\end{align}
for $x\in E$, $t\in\mathcal{I}_x$, where $Qh(x)=\int_E h(y)Q(x,\ud y)$ for $x\in\bar{E}$.
\end{theorem}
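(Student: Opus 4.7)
The strategy rests on the fact that a change of measure via $M^h$ reweights the likelihood of sample paths but does not alter the paths themselves; in particular, the deterministic motion between jumps, governed by $\phi$, transfers verbatim to $\tilde{\Prb}$. The task therefore reduces to computing the $\tilde{\Prb}$-law of $(\tau_1,X_{\tau_1})$ conditional on $X_0=x$, and then invoking the strong Markov property together with the multiplicative structure of $M^h$ to iterate over later jumps.

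First I would evaluate $M^h_{\tau_1}$ explicitly. On the event $\{\tau_1=s,\,X_{\tau_1}=y\}$ the process coincides with $\phi_x$ on $[0,s)$, so $X_{u-}=\phi_x(u-)$ and $L(\A h)_u=\A h(x,u)$ for every $u\in(0,s]$. Inserting these into \eqref{eq.M^f} gives
$$M^h_s=\frac{h(y)}{h(x)}\left[\sexp\!\left(\int_{(0,s]}\frac{\A h(x,\ud u)}{h(\phi_x(u-))}\right)\right]^{-1}=:h(y)\,G(x,s),$$
so that all of the randomness in $M^h_s$ is concentrated in the factor $h(y)$. Using that $M^h$ is a $\Prb$-martingale together with the optional-sampling (Bayes) identity, I then obtain
$$\tilde{\Prb}_x(\tau_1\in\ud s,\,X_{\tau_1}\in\ud y)=h(y)\,G(x,s)\,F(x,s-)\,Q(\phi_x(s),\ud y)\,\Lambda(x,\ud s).$$
Normalising the $y$-part by $Qh(\phi_x(s))=\int_E h(y)\,Q(\phi_x(s),\ud y)$ immediately identifies the kernel $\tilde{Q}$ of \eqref{eq.new.Q}. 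Evaluating $M^h_t$ on the complementary event $\{\tau_1>t\}$, where $X_t=\phi_x(t)$ forces $M^h_t=h(\phi_x(t))G(x,t)$, yields $\tilde{F}(x,t)=h(\phi_x(t))\,G(x,t)\,F(x,t)$, from which $\tilde{\Lambda}(x,\ud s)=-\ud\tilde{F}(x,s)/\tilde{F}(x,s-)$.

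The main technical step is to show that this ratio simplifies to \eqref{eq.new.Lambda}. The essential input is the jump relation of the Stieltjes exponential, $\sexp(\alpha)_s=\sexp(\alpha)_{s-}(1+\Delta\alpha_s)$, applied with $\alpha(x,s)=\int_{(0,s]}\A h(x,\ud u)/h(\phi_x(u-))$; this gives
$$\frac{G(x,s)}{G(x,s-)}=\frac{1}{1+\Delta\A h(\phi_x(s))/h(\phi_x(s-))}=\frac{h(\phi_x(s-))}{h(\phi_x(s-))+\Delta\A h(\phi_x(s))}.$$
Substituting into the ratio for $\tilde{\Lambda}$ and cancelling a factor $h(\phi_x(s-))$ produces exactly $Qh(\phi_x(s))/[h(\phi_x(s-))+\Delta\A h(\phi_x(s))]$. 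I expect this algebraic bookkeeping---carefully distinguishing $\phi_x(s-)$ from $\phi_x(s)$ and keeping track of the Lebesgue decomposition of $\A h$ from \eqref{eq.generator}---to be where most of the accounting errors could creep in.

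Finally, the Markov structure under $\tilde{\Prb}$ is propagated by induction on $n$. Because $M^h$ is a multiplicative functional of $X$, the ratio $M^h_t/M^h_{\tau_n}$ on $t\in[\tau_n,\tau_{n+1}]$ has exactly the functional form of the first-jump density of a PDMP started at $X_{\tau_n}$. Combined with the strong Markov property of $X$ under $\Prb$ and the identity $\tilde{\Ep}_x[\,\cdot\mid\F_{\tau_n}]=\Ep_x[(M^h_t/M^h_{\tau_n})\,\cdot\mid\F_{\tau_n}]$---which is valid since $\Ep_x[M^h_t\mid\F_{\tau_n}]=M^h_{\tau_n}$---the first-jump computation transports verbatim to the conditional law of $(\tau_{n+1}-\tau_n,X_{\tau_{n+1}})$ given $\F_{\tau_n}$. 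This establishes that $X$ under $\tilde{\Prb}$ is a general PDMP with unchanged SDS $\phi$ and characteristics $(\tilde{\Lambda},\tilde{Q})$ as claimed.
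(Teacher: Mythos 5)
Your proposal is correct and follows essentially the same route as the paper: evaluate $M^h$ on $\{\tau_1>t\}$ and on $\{\tau_1\in\ud s,\,X_{\tau_1}\in\ud y\}$ to get $\tilde F=m^h F$ and $\tilde G$, then use the Bayes identity and the multiplicative structure of $M^h$ for the strong Markov property, and finally extract $\tilde\Lambda$ and $\tilde Q$ as ratios. The ``algebraic bookkeeping'' you defer is exactly the paper's integration-by-parts computation of $m^h(x,\ud t)$ combined with the identity $\A h(x,\ud t)-\ud h(\phi_x(t))=[Qh(\phi_x(t))-h(\phi_x(t))]\Lambda(x,\ud t)$, and your jump relation for the Stieltjes exponential is the right ingredient for it.
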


\begin{proof}
  By \cite[Lemma 8.6.2]{Oksendal2003Stochastic} we get, for any $\F$-stopping time $T$ and $t>0$,
\begin{align*}
   &\tilde{\Ep}[f(X_{T+t})|\F_T]\\
  =&\frac{\Ep[f(X_{T+t})M^h_{T+t}|\F_T]}{\Ep[M^h_{T+t}|\F_T]}
  =\frac{\Ep[f(X_{T+t})M^h_{T+t}|\F_T]}{M^h_T}\\
  =&\Ep\left[f(X_{T+t})\frac{h(X_{T+t})}{h(X_T)} \exp\!\left[-\int_{(T,T+t]}\!\frac{\ud L(\A^ch)_s}{h(X_{s-})}\right] \!\!\prod_{T<s\leqslant T+t}\!\!\left[1+\frac{\Delta\A h(X_s^-)}{h(X_{s-})}\right]^{-1}\Big|\F_T\right]\\
  =&\Ep\left[f(X_{T+t})\frac{h(X_{T+t})}{h(X_T)} \exp\!\left[-\int_{(T,T+t]}\!\frac{\ud L(\A^ch)_s}{h(X_{s-})}\right] \!\!\prod_{T<s\leqslant T+t}\!\!\left[1+\frac{\Delta\A h(X_s^-)}{h(X_{s-})}\right]^{-1}\Big|X_T\right]\\
  =&\tilde{\Ep}[f(X_{T+t})|X_T].
\end{align*}
  Thus, the process $X$ has the strong Markov property on $(\Omega,\F,\tilde{\Prb})$.

Now we denote
\begin{equation*}
    m^h(x,t)=\frac{h(\phi_x(t))}{h(x)}g^h(x,t),
\end{equation*}
where
\begin{equation*}
  g^h(x,t)=\exp\left[-\int_{(0,t]}\frac{\A^ch(x,\ud s)}{h(\phi_x(s-))}\right] \prod_{0<s\leqslant t}\left[1+\frac{\Delta\A h(\phi_x(s))}{h(\phi_x(s-))}\right]^{-1}
\end{equation*}
for $x\in E$, $t\in \mathcal{I}_x$.
By (\ref{eq.newP}), on $(\Omega,\F,\tilde{\Prb})$, we have the following conditional survival function and transition kernel for $\{(\tau_n,X_{\tau_n})\}_{n\geqslant 0}$
\begin{align*}
  \tilde{F}(x,t)=&\tilde{\Ep}_x[\bbbone_{\{\tau_1>t\}}]=\Ep_x[M^h_t\bbbone_{\{\tau_1>t\}}]
  =m^h(x,t)F(x,t),\\
  \tilde{G}(x,\ud t,\ud y)
  =& \tilde{\Ep}_x[\bbbone_{\{\tau_1\in\ud t\}}\bbbone_{\{X_{\tau_1}\in\ud y\}}]
  = \Ep_x[E_t^h\bbbone_{\{\tau_1\in\ud t\}}\bbbone_{\{X_{\tau_1}\in\ud y\}}] \\
  =& \frac{h(y)}{h(x)}g^h(x,t)F(x,\ud t)Q(\phi_x(t)),\ud y) .
\end{align*}
For any $x\in E$, $t\in\mathcal{I}_x$,
$$g^h(x,\ud t)=-\frac{g^h(x,t-)}{h(\phi_x(t-))+\Delta\A h(\phi_x(t))}\A h(x,\ud t).$$
Then, by the formula of integration by parts, we have
\begin{align*}
  m^h(x,\ud t)
  =& \frac{1}{h(x)}\big\{g^h(x,t-)\ud h(\phi_x(t))+h(\phi_x(t))g^h(x,\ud t)\big\} \\
  =& \frac{g^h(x,t-)}{h(x)}\left\{\ud h(\phi_x(t))-\frac{h(\phi_x(t))\A h(x,\ud t)} {h(\phi_x(t-))+\Delta\A h(\phi_x(t))}\right\} \\
  =& \frac{g^h(x,t-)}{h(x)}\left\{\ud h(\phi_x(t))+\frac{\A h(x,t-)\ud h(\phi_x(t))-\ud\big(h(\phi_x(t))\A h(x,t)\big)}{h(\phi_x(t-))+\Delta\A h(\phi_x(t))}\right\} \\
  =& \frac{g^h(x,t-)}{h(x)}\frac{h(\phi_x(t-))\ud h(\phi_x(t))+\A h(x,t)\ud h(\phi_x(t))-\ud\big(h(\phi_x(t))\A h(x,t)\big)}{h(\phi_x(t-))+\Delta\A h(\phi_x(t))} \\
  =& \frac{g^h(x,t-)}{h(x)}\frac{h(\phi_x(t-))\ud h(\phi_x(t))-h(\phi_x(t-))\A h(x,\ud t)}{h(\phi_x(t-))+\Delta\A h(\phi_x(t))} \\
  =& -m^h(x,t-)\frac{Qh(\phi_x(t))-h(\phi_x(t))}{h(\phi_x(t-))+\Delta\A h(\phi_x(t))} \Lambda(x,\ud t),
\end{align*}
and
\begin{align*}
  \tilde{F}(x,\ud t)
  =& m^h(x,t)F(x,\ud t)-F(x,t-)m^h(x,\ud t) \\
  =& m^h(x,t-)\frac{h(\phi_x(t))F(x,\ud t)+F(x,t-)[Qh(\phi_x(t))-h(\phi_x(t))]\Lambda(x,\ud t)}{h(\phi_x(t-))+\Delta\A h(\phi_x(t))} \\
  =& m^h(x,t-)\frac{Qh(\phi_x(t))}{h(\phi_x(t-))+\Delta\A h(\phi_x(t))}F(x,\ud t).
\end{align*}
Thus
\begin{align*}
  &\tilde{\Lambda}(x,\ud t)=\frac{\tilde{F}(x,\ud t)}{\tilde{F}(x,s-)}
  =\frac{Qh(\phi_x(t))}{h(\phi_x(t-))+\Delta\A h(\phi_x(t))}\Lambda(x,\ud t),\\
  &\tilde{Q}(\phi_x(t),\ud y) = \frac{\tilde{G}(x,\ud t,\ud y)}{\tilde{F}(x,\ud t)}
  = \frac{h(y)}{Qh(\phi_x(t))}Q(\phi_x(t),\ud y).
\end{align*}
The theorem is proved.
\end{proof}

By (\ref{eq.new.Lambda}), we notice that $\tilde{\Lambda}(x,\cdot)$ is absolutely continuous with respect to $\Lambda(x,\cdot)$. Thus, we have $J_{\tilde{\Lambda}}=J_\Lambda$.

\begin{theorem}\label{thm.new.generator}
  Let $X=\{X_t\}_{t\geqslant 0}$ be a general PDMP on $(\Omega,\F,\Prb)$ with measure-valued generator $(\A,\D(\A))$. Probability measure $\tilde{\Prb}$ is defined by \textnormal{(\ref{eq.newP})}. Then on the new probability space $(\Omega,\F,\tilde{\Prb})$, the measure-valued generator of $X$ is
  \begin{equation}\label{eq.new.Af}
    \tilde{\A}f(x,\ud t)=Df(x,\ud t)+\Lambda(x,\ud t) \int_E\frac{\big[f(y)-f(\phi_x(t))\big]h(y)}{h(\phi_x(t-))+\Delta\A h(\phi_x(t))}Q(\phi_x(t),\ud y)
  \end{equation}
 with the domain $\D(\tilde{\A})$ which contains all the functions $f\in\mathcal{M}(\bar{E})$ with locally path-finite-variation such that for any $x\in E$ and $t\in\mathcal{I}_x$,
 \begin{equation}\label{eq.new.D(A)}
   \int_{(0,t]}\int_E\frac{|f(y)-f(\phi_x(s))|h(y)}{h(\phi_x(s-))+\Delta\A h(\phi_x(s))}Q(\phi_x(s),\ud y)\Lambda(x,\ud s)<\infty.
 \end{equation}
\end{theorem}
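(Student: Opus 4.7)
The plan is to bootstrap from Theorem \ref{thm.new.Markov&triple}, which already identifies $X$ as a general PDMP on $(\Omega,\F,\tilde{\Prb})$ with characteristic triple $(\phi,\tilde{\Lambda},\tilde{Q})$, and then simply read off its measure-valued generator from the universal formula \eqref{eq.generator} established in Section 2, applied to this new triple.

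First I would verify the prerequisite for \eqref{eq.generator} to apply under $\tilde{\Prb}$, namely that $J_{\tilde{\Lambda}}$ contains no confluent state. Since \eqref{eq.new.Lambda} makes $\tilde{\Lambda}(x,\cdot)$ absolutely continuous with respect to $\Lambda(x,\cdot)$, the jump set $J_{\tilde{\Lambda}}$ is contained in (in fact equal to) $J_{\Lambda}$, and the standing assumption made after Lemma \ref{lem.q=Q} carries over. I would also note that $h(\phi_x(t-))+\Delta\A h(\phi_x(t))$ is nonzero, since factoring out $h(\phi_x(t-))$ yields $h(\phi_x(t-))\bigl[1+\Delta\A h(\phi_x(t))/h(\phi_x(t-))\bigr]$, and both factors are nonzero by $h\in\mathcal{M}^*(\A)$.

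Second, I would substitute the new characteristics into \eqref{eq.generator}. Writing \eqref{eq.generator} for the triple $(\phi,\tilde{\Lambda},\tilde{Q})$ gives
\begin{equation*}
  \tilde{\A}f(x,\ud t)=Df(x,\ud t)+\tilde{\Lambda}(x,\ud t)\int_E[f(y)-f(\phi_x(t))]\tilde{Q}(\phi_x(t),\ud y).
\end{equation*}
Plugging in \eqref{eq.new.Lambda} and \eqref{eq.new.Q}, the factor $Qh(\phi_x(t))$ in the numerator of $\tilde{\Lambda}$ cancels against the denominator of $\tilde{Q}$, so that
\begin{equation*}
  \tilde{\Lambda}(x,\ud t)\tilde{Q}(\phi_x(t),\ud y)=\frac{h(y)}{h(\phi_x(t-))+\Delta\A h(\phi_x(t))}Q(\phi_x(t),\ud y)\Lambda(x,\ud s),
\end{equation*}
and \eqref{eq.new.Af} follows immediately; the $Df$ term is unchanged because $\phi$ itself is unchanged.

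Third, I would read off the domain $\D(\tilde{\A})$ from the definition given in Section 2: a function $f\in\mathcal{M}(\bar{E})$ belongs to $\D(\tilde{\A})$ iff it is of locally path-finite-variation and satisfies \eqref{eq.D(A) condition} with $(\Lambda,Q)$ replaced by $(\tilde{\Lambda},\tilde{Q})$. The first condition depends only on $\phi$, hence is unaffected by the change of measure. For the second, the same cancellation performed above transforms the integrand into the one in \eqref{eq.new.D(A)}, giving the claimed characterization.

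I do not anticipate a serious obstacle beyond careful bookkeeping of the algebraic cancellation and an explicit check that the denominator in \eqref{eq.new.Af} never vanishes; the real content of the theorem sits in Theorem \ref{thm.new.Markov&triple}, once we know that the measure-valued generator formula \eqref{eq.generator} is \emph{intrinsic} — i.e., applies to any general PDMP regardless of the underlying probability measure — the present theorem becomes a direct substitution.
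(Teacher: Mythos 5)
Your proposal is correct and follows exactly the paper's own route: the paper's proof is the single sentence that the result follows directly from Theorem \ref{thm.new.Markov&triple} together with the general form \eqref{eq.generator} of the measure-valued generator and its domain. Your write-up merely spells out the cancellation of $Qh(\phi_x(t))$ and the non-vanishing of the denominator, which the paper leaves implicit (note only the harmless typo $\Lambda(x,\ud s)$ for $\Lambda(x,\ud t)$ in your second display).
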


\begin{proof}
  By the form of the measure-valued generator and its domain, following from Theorem \ref{thm.new.Markov&triple}, the conclusion can be get directly.
\end{proof}

\begin{corollary}\label{cor.new.Af-1&2}
  Note that the new measure-valued generator \textnormal{(\ref{eq.new.Af})} can be rewritten as
\begin{equation}\label{eq.new.Af-1}
  \tilde{\A}f(x,\ud t)
  =\frac{\A(fh)(x,\ud t)-f(\phi_x(t-))\A h(x,\ud t)}{h(\phi_x(t-))+\Delta\A h(\phi_x(t))} ,
\end{equation}
or using the \emph{op\'erateur carr\'e du champ}
\begin{equation}\label{eq.new.Af-2}
  \tilde{\A}f(x,\ud t)=\A f(x,\ud t)
+\frac{\langle f,h\rangle_{\A}(x,\ud t)}{h(\phi_x(t-))+\Delta\A h(\phi_x(t))},
\end{equation}
where $\langle f,h\rangle_{\A}$ is also an additive functional of the SDS $\phi$ defined by
\begin{align*}
  \langle f,h\rangle_{\A}(x,\ud t)
=&\A(fh)(x,\ud t)-f(\phi_x(t-))\A h(x,\ud t)-h(\phi_x(t-))\A f(x,\ud t)\\
&-\ud [\A f(x,t),\A h(x,t)]_t,
\end{align*}
and
\begin{equation*}
  [\A f(x,t),\A h(x,t)]_t=\sum_{0<s\leqslant t}\Delta\A f(\phi_x(s))\,\Delta\A h(\phi_x(s)).
\end{equation*}
\end{corollary}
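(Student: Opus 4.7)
The plan is to verify both identities by direct algebraic manipulation of (\ref{eq.new.Af}), using the representation (\ref{eq.generator}) of the measure-valued generator together with the c\`adl\`ag integration-by-parts formula applied along each trajectory of $\phi$.

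For (\ref{eq.new.Af-1}), I would start by applying (\ref{eq.generator}) separately to $\A(fh)$ and to $\A h$, and then substitute the integration-by-parts identity
\begin{equation*}
D(fh)(x,\ud t) = f(\phi_x(t-))\,Dh(x,\ud t) + h(\phi_x(t-))\,Df(x,\ud t) + \ud[Df,Dh]_t,
\end{equation*}
which is valid since $f\circ\phi_x$ and $h\circ\phi_x$ are c\`adl\`ag of finite variation. This expresses the numerator $\A(fh)(x,\ud t) - f(\phi_x(t-))\A h(x,\ud t)$ as $h(\phi_x(t-))\,Df(x,\ud t)$ plus a purely atomic remainder. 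One then subtracts $K(t)\,Df(x,\ud t) + \Lambda(x,\ud t)\int_E [f(y)-f(\phi_x(t))]h(y)\,Q(\phi_x(t),\ud y)$, with $K(t) := h(\phi_x(t-)) + \Delta\A h(\phi_x(t))$, and checks that the resulting signed measure vanishes. All continuous contributions match immediately, so only a purely atomic residue supported on the jumps of $f\circ\phi_x$ remains; at each such jump $s$, expanding $\Delta\A h(\phi_x(s)) = \Delta h(\phi_x(s)) + \Delta\Lambda(\phi_x(s))\int_E [h(y) - h(\phi_x(s))]\,Q(\phi_x(s),\ud y)$ produces the exact cancellation.

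For (\ref{eq.new.Af-2}), I would rearrange the definition of the op\'erateur carr\'e du champ as
\begin{equation*}
\A(fh)(x,\ud t) - f(\phi_x(t-))\,\A h(x,\ud t) = \langle f,h\rangle_{\A}(x,\ud t) + h(\phi_x(t-))\,\A f(x,\ud t) + \ud[\A f,\A h]_t,
\end{equation*}
substitute into (\ref{eq.new.Af-1}), and divide through by $K(t)$. This reduces the claim to the auxiliary identity $\bigl(h(\phi_x(t-))\A f(x,\ud t) + \ud[\A f,\A h]_t\bigr)/K(t) = \A f(x,\ud t)$, which I would verify on the continuous and atomic parts separately. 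The continuous measure $\A^c f$ places no mass on the countable set of jumps of $\A h$, on which $K(t) = h(\phi_x(t-))$, so its coefficient equals $1$ almost everywhere with respect to $\A^c f$; at each atom $s$ of $\A f$, the combined atomic contribution is $\bigl(h(\phi_x(s-))\Delta\A f(\phi_x(s)) + \Delta\A f(\phi_x(s))\,\Delta\A h(\phi_x(s))\bigr)/K(s) = \Delta\A f(\phi_x(s))$, which restores the atomic part $\A^{pd}f$.

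The main obstacle will be the careful bookkeeping of the Lebesgue decompositions of $\A f$, $\A h$ and $\Lambda$ under the nonlinear operation of dividing by $K(t)$, and ensuring that the atomic residues at jumps of $\phi$, $\Lambda$, $f\circ\phi$ and $h\circ\phi$ cancel exactly. Once these decompositions are tracked through the calculation, everything else reduces to routine symbol-pushing.
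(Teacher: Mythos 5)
Your plan is correct and follows essentially the same route as the paper's proof: both arguments reduce everything to c\`adl\`ag integration by parts along trajectories plus the Lebesgue decomposition of $\A h$, the only difference being that you derive (\ref{eq.new.Af-1}) from (\ref{eq.new.Af}) and then pass to (\ref{eq.new.Af-2}), while the paper runs the same computations in the reverse direction. The cancellations you anticipate do occur exactly as described --- the atomic residue cancels upon expanding $\Delta\A h(x)=\Delta h(x)+\Delta\Lambda(x)\int_E[h(y)-h(x)]Q(x,\ud y)$, and $h(\phi_x(t-))\A f(x,\ud t)+\ud[\A f(x,t),\A h(x,t)]_t=\bigl(h(\phi_x(t-))+\Delta\A h(\phi_x(t))\bigr)\A f(x,\ud t)$ gives your auxiliary identity.
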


\begin{proof}
Let $g\in \mathfrak{A}_\phi$ defined by
\begin{equation*}
  g(x,\ud t)=\A f(x,\ud t)
+\frac{\langle f,h\rangle_{\A}(x,\ud t)}{h(\phi_x(t-))+\Delta\A h(\phi_x(t))}
\end{equation*}
for $x\in E$, $t\in\mathcal{I}_x$.
Notice that, by the formula of integration by parts
\begin{align*}
  \ud[\A f(x,t),\A h(x,t)]_t=&\ud\big(\A f(x,t)\A h(x,t)\big)-\A f(x,t-)\A h(x,\ud t)\\
  &-\A h(x,t-)\A f(x,\ud t).
\end{align*}
Thus
\begin{align*}
  g(x,\ud t)=&\frac{1}{h(\phi_x(t-))+\Delta\A h(\phi_x(t))}
  \Big\{\Delta\A h(\phi_x(t))\A f(x,\ud t)\\
  &+\A(fh)(x,\ud t)-f(\phi_x(t-))\A h(x,\ud t)-\ud\big(\A f(x,t)\A h(x,t)\big)\\
  &+\A f(x,t-)\A h(x,\ud t)+\A h(x,t-)\A f(x,\ud t)\Big\}\\
  =&\frac{1}{h(\phi_x(t-))+\Delta\A h(\phi_x(t))} \Big\{\A(fh)(x,\ud t)-f(\phi_x(t-))\A h(x,\ud t)\\
  &-\ud\big(\A f(x,t)\A h(x,t)\big)+\A f(x,t-)\A h(x,\ud t)+\A h(x,t)\A f(x,\ud t)\Big\}\\
  =&\frac{\A(fh)(x,\ud t)-f(\phi_x(t-))\A h(x,\ud t)}{h(\phi_x(t-))+\Delta\A h(\phi_x(t))}.
\end{align*}
Furthermore, note that
\begin{align*}
   &f(\phi_x(t-))\A h(x,\ud t)\\
  =&\ud\big(f(\phi_x(t))\A h(x,t)\big)-\A h(x,t)\ud f(\phi_x(t))\\
  =&\ud\big(f(\phi_x(t))\A h(x,t)\big)-\A h(x,t-)\ud f(\phi_x(t))-\Delta\A h(\phi_x(t))\ud f(\phi_x(t))\\
  =&f(\phi_x(t))\A h(x,\ud t)-\Delta\A h(\phi_x(t))Df(x,\ud t),
\end{align*}
then
\begin{align*}
  &\A(fh)(x,\ud t)-f(\phi_x(t-))\A h(x,\ud t)\\
  =&D(fh)(x,\ud t)+\Lambda(x,\ud t)\left[\int_Ef(y)h(y)Q(\phi_x(t)\ud y) -f(\phi_x(t))h(\phi_x(t))\right]\\
  &-f(\phi_x(t))\Big[Dh(x,\ud t)+\Lambda(x,\ud t)\big[Qh(\phi_x(t))-h(\phi_x(t))\big]\Big] \\ &-\Delta\A h(\phi_x(t))Df(x,\ud t)\\
  =&D(fh)(x,\ud t)-f(\phi_x(t))Dh(x,\ud t)-\Delta\A h(\phi_x(t))Df(x,\ud t)\\
  &+\Lambda(x,\ud t)\Big[\int_Ef(y)h(y)Q(\phi_x(t)\ud y)-f(\phi_x(t-))Qh(\phi_x(t))\Big]\\
  =&h(\phi_x(t-))Df(x,\ud t)-\Delta\A h(\phi_x(t))Df(x,\ud t)\\
  &+\Lambda(x,\ud t)\int_E\big[f(y)-f(\phi_x(t))\big]h(y)Q(\phi_x(t),\ud y).
\end{align*}
Thus we have $g=\tilde{\A} f$, which completes the proof.
\end{proof}

\begin{theorem}
  Let $X$ be a general PDMP on $(\Omega,\F,\Prb)$ with $(\A,\D(\A))$. Probability measure $\tilde{\Prb}$ is defined by \textnormal{(\ref{eq.newP})}. Then
  \begin{equation}\label{eq.reverse}
    \frac{\ud\Prb_t}{\ud\tilde{\Prb}_t}=\tilde{M}_t^{h^{-1}}
    =\frac{h^{-1}(X_t)}{h^{-1}(X_0)}\left[\sexp\left(\int_{(0,t]}\frac{\ud L(\tilde{\A}h^{-1})_s}{h^{-1}(X_{s-})}\right)\right]^{-1},\quad t\geqslant 0.
  \end{equation}
\end{theorem}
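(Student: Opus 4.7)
Since $h$ is a good function and $M^h$ is a strictly positive $\Prb$-martingale, (\ref{eq.newP}) yields $\ud\Prb_t/\ud\tilde\Prb_t = (M^h_t)^{-1}$ by the Radon--Nikodym theorem. The theorem is therefore equivalent to the algebraic identity $\tilde{M}_t^{h^{-1}} = (M^h_t)^{-1}$, where $\tilde{M}^{h^{-1}}$ is the process built by plugging $h^{-1}$ and $\tilde\A$ into formula (\ref{eq.M^f}). My plan is to compute $\tilde\A h^{-1}$ in closed form and verify that the associated Stieltjes exponential is precisely the reciprocal of the one appearing in $M^h$.

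For the first step, I would apply the product-rule form (\ref{eq.new.Af-1}) from Corollary \ref{cor.new.Af-1&2} with $f=h^{-1}$. Since $fh\equiv 1$ and $\A 1\equiv 0$ by (\ref{eq.generator}) (both $D1$ and the $Q$-integral term vanish), the numerator collapses and
\begin{equation*}
\tilde\A h^{-1}(x,\ud t) \;=\; -\,\frac{h^{-1}(\phi_x(t-))}{h(\phi_x(t-))+\Delta\A h(\phi_x(t))}\,\A h(x,\ud t).
\end{equation*}
Lifting this pointwise identity to a path identity through the operator $L$ defined in (\ref{eq.L(a)}), one arrives at
\begin{equation*}
\frac{\ud L(\tilde\A h^{-1})_s}{h^{-1}(X_{s-})} \;=\; -\,\frac{\ud L(\A h)_s}{h(X_{s-})+\Delta\A h(X_s^-)}.
\end{equation*}

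Set $A_t=\int_{(0,t]}\ud L(\A h)_s/h(X_{s-})$ and $B_t=\int_{(0,t]}\ud L(\tilde\A h^{-1})_s/h^{-1}(X_{s-})$. On the continuous part of $L(\A h)$ the jump factor $\Delta\A h(X_s^-)$ vanishes identically, so $B^c_t=-A^c_t$. At each jump time $s$ one has $\Delta L(\A h)_s=\Delta\A h(X_s^-)$; writing $\alpha_s:=\Delta A_s=\Delta\A h(X_s^-)/h(X_{s-})$, the elementary identity $1-\alpha_s/(1+\alpha_s)=1/(1+\alpha_s)$ gives $1+\Delta B_s=(1+\alpha_s)^{-1}$. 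Hence $\sexp(B)_t=e^{-A^c_t}\prod_{0<s\leqslant t}(1+\Delta A_s)^{-1}=\sexp(A)_t^{-1}$, and applying (\ref{eq.M^f}) to $\tilde\A$ and $h^{-1}$ gives
\begin{equation*}
\tilde{M}_t^{h^{-1}} \;=\; \frac{h^{-1}(X_t)}{h^{-1}(X_0)}\,\sexp(B)_t^{-1} \;=\; \frac{h(X_0)}{h(X_t)}\,\sexp(A)_t \;=\; (M^h_t)^{-1},
\end{equation*}
which is precisely (\ref{eq.reverse}).

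I expect the main obstacle to be purely bookkeeping in the lift from state space to trajectory via $L$: one must be careful about predictable versus left-continuous versions of $X$ at random jump times when identifying $\phi_{X_{\tau_n}}((\cdot)-)$ with $X_{\cdot-}$ and $\phi_{X_{\tau_n}}(\cdot)$ with $X^-_{\cdot}$. Subsidiary checks, that $h^{-1}\in\D^*(\tilde\A)$ (so $\tilde M^{h^{-1}}$ is well-defined by (\ref{eq.M^f})) and that $\tilde{M}^{h^{-1}}$ is a genuine $\tilde\Prb$-martingale, are straightforward: the former follows from $h\in\D^*(\A)$, the strict positivity of $h$, and the absolute continuity $\tilde\Lambda\ll\Lambda$ noted after Theorem \ref{thm.new.Markov&triple}; the latter is automatic from the identity $\tilde M^{h^{-1}}=1/M^h$ via Bayes' rule.
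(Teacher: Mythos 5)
Your proposal is correct and follows essentially the same route as the paper: reduce to the algebraic identity $\tilde{M}^{h^{-1}}_t=(M^h_t)^{-1}$, compute $\tilde{\A}h^{-1}$ from (\ref{eq.new.Af-1}) using $\A(h^{-1}h)=\A 1=0$, and then match the continuous parts and the jump factors of the two Stieltjes exponentials exactly as in the paper's displayed identities. Your additional remarks on lifting through $L$ and on $h^{-1}\in\D^*(\tilde{\A})$ only make explicit what the paper leaves implicit.
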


\begin{proof}
  By (\ref{eq.newP}), we have
  $$\frac{\ud\Prb_t}{\ud\tilde{\Prb}_t}=(M^h_t)^{-1}.$$
  So we only need to prove that $\tilde{M}_t^{h^{-1}}=(M^h_t)^{-1}$.

  Following from (\ref{eq.new.Af-1}), we have
  \begin{equation*}
    \frac{\tilde{\A}h^{-1}(x,\ud t)}{h^{-1}(\phi_x(t-))}=-\frac{\A h(x,\ud t)}{h(\phi_x(t-))+\Delta\A h(\phi_x(t))}.
  \end{equation*}
  Then
  \begin{align*}
    \frac{\tilde{\A}^ch^{-1}(x,\ud t)}{h^{-1}(\phi_x(t-))}=&-\frac{\A^c h(x,\ud t)}{h(\phi_x(t-))},\\
    1+\frac{\Delta\tilde{\A}h^{-1}(\phi_x(t))}{h^{-1}(\phi_x(t-))}=&\frac{h(\phi_x(t-))}{h(\phi_x(t-))+\Delta\A h(\phi_x(t))}.
  \end{align*}
  Hence, we get $\tilde{M}_t^{h^{-1}}=(M^h_t)^{-1}$.
\end{proof}




  \bibliographystyle{elsart-num-sort}
  \bibliography{PDMPsbib}





\end{document}